\newcommand{\diam}{\text{{\rm diam}}}
\newcommand{\R}{{\mathbb R}} 
\renewcommand{\(}{\left(}
\renewcommand{\)}{\right)}
\newtheorem{theorem}{Theorem}[section]
\newtheorem{lemma}[theorem]{Lemma}
\newtheorem{proposition}[theorem]{Proposition}
\newtheorem{definition}[theorem]{Definition}
\newtheorem{remark}[theorem]{Remark}
\begin{document}

\title[The Multi-marginal Monge Problem and an application to metasurfaces \today]{The Multi-marginal Monge Problem\\ and an application to metasurfaces}
\author[I. Altiner and C. E. Guti\'errez]{Irem Altiner and Cristian E. Guti\'errez}
\thanks{This research was partially supported by NSF Grants DMS-1600578 and DMS-2203555. \today}
%
%

\begin{abstract}
This paper studies the multi-marginal Monge problem in the setting of compact metric spaces proving existence and uniqueness of solutions when the cost function is Lipschitz. 
We apply the results obtained to solve an optics problem involving metalenses, that is, we design a refracting-reflecting metalens that preserves given energy distributions.
 
\end{abstract}

\maketitle

\tableofcontents

\setcounter{equation}{0}
\section{Introduction}

This paper considers the multimarginal Monge problem, described as follows.
Let $(D_j,d_j)$ be compact metric spaces for $1\leq j\leq N$, and let $c:D_1\times D_2\times \cdots \times D_N\to \R$ be a cost function.
Assume we are given Borel measures $\mu_j$ on $D_j$ satisfying the mass conservation condition $\mu_1(D_1)=\mu_j(D_j)$ for all $1\leq j \leq N$.

A multivalued map $T_j:D_1\to D_j$ is said to be measure-preserving with respect to $(\mu_1,\mu_j)$ if for every Borel set $E\subset D_j$, the set $T_j^{-1}(E)$ is $\mu_1$-measurable and
\[
\mu_1\big(T_j^{-1}(E)\big) = \mu_j(E).
\]
The multimarginal, or multivariate, Monge problem seeks to find maps $T_j:D_1\to D_j$, with $2\leq j\leq N$, that are measure-preserving with respect to $(\mu_1,\mu_j)$, such that $T_j(x_1)$ is single-valued for $\mu_1$-almost every $x_1 \in D_1$, and minimize the total cost
\[
\int_{D_1} c\big(x_1,T_2(x_1),\cdots ,T_N(x_1)\big)\,d\mu_1(x_1)
\]
among all such families of measure-preserving maps.

This is a problem that generated recently great interest and has been considered in various settings, see \cite{2015-pass:multimarginalOTtheoryandapplications}, 
\cite{2023-Pass-Jimenez:mongesolutionsuniquenessmultimarginalgraphtheory}, 
\cite{2022-PassandJimenez:Mongesolutionsuniquenessmultimarginal}, 
and \cite{2022-vargas-jimenez:phdthesismultimarginal}; see also \cite[Section 2.1] {RR:masstransport}.

The main goal of this paper is to show that if the cost function $c$ is Lipschitz (inequality~\eqref{eq:cost is Lipschitz}) and the $c$-normal mapping (or $c$-superdifferential, see Definition~\ref{def:c normal mapping}) is single-valued almost everywhere (see condition~\nameref{hyp:SV}), then the $c$-normal mapping solves the multimarginal Monge problem (Theorem~\ref{thm:existence of solutions to Monge problem}). Moreover, the solution is unique almost everywhere when $\mu_1$ is strictly positive (Theorem~\ref{thm:uniqueness of the  multimarginal problem}).
The technique used in this paper is an extension of the duality arguments used in \cite[Chapter 6]{2023-gutierrez:bookonOTandoptics} to deal with the Monge problem.

For domains $D_j \subset \R^n$ and $C^1$ cost functions $c$, the single-valuedness condition~\nameref{hyp:SV} holds when the map
\[
(x_2,\cdots,x_N)\mapsto \nabla_{x_1}c(x_1,x_2,\cdots,x_N)
\]
is injective for each $x_1\in D_1$ (Proposition~\ref{prop:twist implies SV}). In addition, Theorem~\ref{thm:representation formula for multimarginal} provides a representation formula for the gradient of the cost function (equation~\eqref{eq:representation formula for multimarginal}).

Our motivation for studying the multimarginal Monge problem arises from an application in optics related to metasurfaces. Indeed, in Section~\ref{subsec:optics application of multivariate monge}, using this theory we design a reflecting-refracting metalens obtaining Theorem \ref{thm:main result representation of the phase}. 
In this context, the cost function is given by~\eqref{multi-marginal cost}, and in Section~\ref{analysisofmultimarginalcost}, we verify that the map $(x_2,x_3)\mapsto \nabla_{x_1}c(x_1,x_2,x_3)$ is injective for all $x_1$ under given conditions on the metasurface.

\setcounter{equation}{0}
\section{$c$-Transforms}
Let $(D_j,d_j)$ be compact metric spaces with $1\leq j\leq N$, and let $c:D_1\times D_2\times \cdots \times D_N\to \R$ be a Lipschitz function, i.e., there exists a constant $K>0$ such that 
\begin{equation}\label{eq:cost is Lipschitz}
|c(x_1,\cdots ,x_N)-c(y_1,\cdots ,y_N)|\leq K\,\(d_1(x_1,y_1)+\cdots +d_N(x_N,y_N)\),
\end{equation}
for all $x_i,y_i\in D_i$, $1\leq i\leq N$.
\begin{definition}
Let $f_j\in C(D_j)$ with $1\leq j\leq N$. The $c$-transform of the vector $F=\(f_1,\cdots ,f_N\)$ is the $n$-vector $F^c=\(f^c_1,\cdots ,f^c_N\)$ having components 
\[
f_j^c(x_j)=\inf_{z_k\in D_k,k\neq j}
\left\{
c\(z_1,\cdots ,z_{j-1},x_j,z_{j+1},\cdots ,z_N\)
-
\sum_{k\neq j} f_k(z_k)
\right\}.
\]
The function $f_j^c$ is called the $j$-th $c$-transform of $F$.
\end{definition}
Since the spaces are compact and the functions are continuous, the $c$-transforms are clearly finite.
Notice that the definition of $c$-transform depends on the components of the vector $F=(f_1,\cdots ,f_{j-1},f_j,f_{j+1},\cdots ,f_N)$. For example,
if we modify one coordinate of this vector taking $G=(f_1,\cdots ,f_{j-1},g_j,f_{j+1},\cdots ,f_N)$, then the $j$-th $c$-transform of $G$ equals the $j$-th $c$-transform of the vector $F$.
But if $k\neq j$, the $k$-th $c$-transform of $G$ is not necessarily equal to the $k$-th $c$-transform of $F$.

\begin{lemma}\label{lm:c transforms are uniformly Lipschitz}
For any $f_j\in C(D_j)$ with $1\leq j\leq N$, the $j$th $c$-transform of the vector $\(f_1,\cdots ,f_N\)$
is a Lipschitz function with the constant $K$ from \eqref{eq:cost is Lipschitz}.
\end{lemma}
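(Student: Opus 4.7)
The plan is to use the standard fact that an infimum of a family of uniformly Lipschitz functions, indexed by an arbitrary parameter, is itself Lipschitz with the same constant. Fix $j$ and think of $f_j^c(x_j)$ as
\[
f_j^c(x_j)=\inf_{(z_k)_{k\neq j}\in \prod_{k\neq j} D_k} F_{(z_k)}(x_j),
\]
where $F_{(z_k)}(x_j):=c(z_1,\dots,z_{j-1},x_j,z_{j+1},\dots,z_N)-\sum_{k\neq j}f_k(z_k)$. The point is that, for each fixed tuple $(z_k)_{k\neq j}$, the function $F_{(z_k)}$ depends on $x_j$ only through the $j$-th slot of $c$, so by~\eqref{eq:cost is Lipschitz} applied with $y_i=x_i$ for $i\neq j$ (so the $d_i(x_i,y_i)$ terms vanish), one has $|F_{(z_k)}(x_j)-F_{(z_k)}(y_j)|\leq K\,d_j(x_j,y_j)$, with the same $K$ and independently of the $z_k$'s.

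To carry this out cleanly, I would first note that by compactness of $\prod_{k\neq j}D_k$ and continuity of $c$ and of the $f_k$, the infimum in the definition of $f_j^c(y_j)$ is attained at some tuple $(z_k^\ast)_{k\neq j}$. Using this tuple as a test point for $f_j^c(x_j)$ gives
\[
f_j^c(x_j)\leq F_{(z_k^\ast)}(x_j)\leq F_{(z_k^\ast)}(y_j)+K\,d_j(x_j,y_j)=f_j^c(y_j)+K\,d_j(x_j,y_j).
\]
Exchanging the roles of $x_j$ and $y_j$ yields $|f_j^c(x_j)-f_j^c(y_j)|\leq K\,d_j(x_j,y_j)$, which is the claim.

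There is really no main obstacle here: the argument is the routine ``inf preserves Lipschitz constants'' trick, and the only thing to be slightly careful about is to apply~\eqref{eq:cost is Lipschitz} only to the $j$-th coordinate so that the sum $d_1(x_1,y_1)+\cdots+d_N(x_N,y_N)$ collapses to the single term $d_j(x_j,y_j)$. If one prefers not to invoke attainment of the infimum, one can equivalently pick an $\varepsilon$-almost minimizer $(z_k^\varepsilon)_{k\neq j}$ for $f_j^c(y_j)$, repeat the same chain of inequalities to obtain $f_j^c(x_j)-f_j^c(y_j)\leq K\,d_j(x_j,y_j)+\varepsilon$, and let $\varepsilon\to 0^+$.
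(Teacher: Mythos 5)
Your proof is correct and follows essentially the same argument as the paper: attain the infimum defining $f_j^c(y_j)$ at a tuple by compactness, use that tuple as a competitor for $f_j^c(x_j)$, and apply the Lipschitz bound~\eqref{eq:cost is Lipschitz} only in the $j$-th coordinate, then symmetrize. The $\varepsilon$-almost-minimizer variant you mention is a fine alternative but not needed here.
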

\begin{proof}
Given $y_j\in D_j$, since the spaces are compact there exist $\bar y_k\in D_k$ with $k\neq j$ such that
\[
f_j^c(y_j)=
c\(\bar y_1,\cdots ,\bar y_{j-1},y_j,\bar y_{j+1},\cdots ,\bar y_N\)
-
\sum_{k\neq j} f_k(\bar y_k).
\]
Hence
\begin{align*}
f_j^c(x_j)-f_j^c(y_j)
&=
f_j^c(x_j)
-
c\(\bar y_1,\cdots ,\bar y_{j-1},y_j,\bar y_{j+1},\cdots ,\bar y_N\)
-
\sum_{k\neq j} f_k(\bar y_k)\\
&\leq
\resizebox{0.85\textwidth}{!}{$c\(\bar y_1,\cdots ,\bar y_{j-1},x_j,\bar y_{j+1},\cdots ,\bar y_N\)-
\sum_{k\neq j} f_k(\bar y_k)
-
\(c\(\bar y_1,\cdots ,\bar y_{j-1},y_j,\bar y_{j+1},\cdots ,\bar y_N\)
-
\sum_{k\neq j} f_k(\bar y_k)\)$}\\
&=c\(\bar y_1,\cdots ,\bar y_{j-1},x_j,\bar y_{j+1},\cdots ,\bar y_N\)
-c\(\bar y_1,\cdots ,\bar y_{j-1},y_j,\bar y_{j+1},\cdots ,\bar y_N\)\\
&\leq
K\,d_j(x_j,y_j).
\end{align*}
Reversing the roles of $x_j$ and $y_j$ the desired result follows.
\end{proof}

\begin{definition}
The class of admissible functions is defined by 
\[
\mathcal K
=
\{(f_1,\cdots ,f_N)\in C(D_1)\times \cdots \times C(D_N):
f_1(x_1)+\cdots +f_N(x_N)\leq c(x_1,\cdots ,x_N)\,\forall x_j\in D_j, 1\leq j\leq N
\}.
\]
\end{definition}

\begin{lemma}\label{lm:the c of any is in K}
For any $(f_1,\cdots ,f_N)\in C(D_1)\times \cdots \times C(D_N)$ we have that 
\[
(f_1^c,f_2,\cdots ,f_N), (f_1,f_2^c,\cdots ,f_N),\cdots ,(f_1,f_2,\cdots ,f_N^c)\in \mathcal K.
\]
\end{lemma}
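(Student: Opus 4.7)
The plan is to prove each of the $N$ containments in essentially the same way, by extracting from the definition of $f_j^c$ the pointwise inequality that is exactly the admissibility condition. Fix $j$ with $1\leq j\leq N$ and consider the tuple obtained by replacing the $j$-th slot with $f_j^c$.

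First I would check membership in $C(D_1)\times\cdots\times C(D_N)$. The only new entry is $f_j^c$, and Lemma~\ref{lm:c transforms are uniformly Lipschitz} already guarantees that $f_j^c$ is Lipschitz on $D_j$, hence continuous. So the tuple lies in the product of continuous function spaces.

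Next I would verify the admissibility inequality. Fix arbitrary $x_1\in D_1,\ldots,x_N\in D_N$. By the definition of the $j$-th $c$-transform,
\[
f_j^c(x_j)=\inf_{z_k\in D_k,\,k\neq j}
\left\{
c\(z_1,\cdots,z_{j-1},x_j,z_{j+1},\cdots,z_N\)
-
\sum_{k\neq j} f_k(z_k)
\right\}.
\]
Specializing the candidate $(z_k)_{k\neq j}$ to the choice $z_k=x_k$ for every $k\neq j$, which is admissible in the infimum since each $x_k\in D_k$, we obtain
\[
f_j^c(x_j)\leq c(x_1,\cdots,x_N)-\sum_{k\neq j} f_k(x_k),
\]
and rearranging yields
\[
\sum_{k\neq j} f_k(x_k)+f_j^c(x_j)\leq c(x_1,\cdots,x_N),
\]
which is precisely the admissibility condition for the modified tuple. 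Since $j$ was arbitrary, all $N$ tuples belong to $\mathcal K$.

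There is no substantive obstacle here: the result is essentially a direct consequence of the infimum being attained or at least bounded above by any particular choice of arguments, and continuity is handed to us by the previous lemma. The only thing to be careful about is making the right specialization $z_k=x_k$ rather than keeping the generic $z_k$'s from the infimum.
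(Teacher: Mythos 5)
Your proof is correct and follows essentially the same route as the paper: specialize the infimum defining $f_j^c(x_j)$ at $z_k=x_k$ for $k\neq j$ and rearrange to get the admissibility inequality. The extra remark that continuity of $f_j^c$ comes from Lemma~\ref{lm:c transforms are uniformly Lipschitz} is a fine (implicit in the paper) addition.
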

\begin{proof}
By definition of the $j$-th $c$-transform we can write
\begin{align*}
&f_1(x_1)+\cdots + f_{j-1}(x_{j-1})+ f_j^c(x_j)+f_{j+1}(x_{j+1})+\cdots +f_N(x_N)\\
&\leq
f_1(x_1)+\cdots + f_{j-1}(x_{j-1})+c(x_1,x_2,\cdots ,x_N)-\sum_{k\neq j}f_k(x_k)+f_{j+1}(x_{j+1})+\cdots +f_N(x_N)\\
&=c(x_1,x_2,\cdots ,x_N),
\end{align*}
that is $(f_1,\cdots , f_{j-1},f_j^c, f_{j+1},\cdots ,f_N)\in \mathcal K$ for $1\leq j\leq N$.
\end{proof}

\begin{lemma}\label{lm:fj less than or equal to fjc for f in K}
For any $(f_1,\cdots ,f_N)\in C(D_1)\times \cdots \times C(D_N)$ we have $(f_j^c)^c(x_j)= f_j^c(x_j)$,
and if $(f_1,\cdots ,f_N)\in \mathcal K$, then $f_j(x_j)\leq f_j^c(x_j)$ for $1\leq j\leq N$, and all $x_j\in D_j$.
\end{lemma}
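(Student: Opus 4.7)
My plan is to address the two assertions separately, since each follows directly from unwinding the definitions.

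For the identity $(f_j^c)^c(x_j) = f_j^c(x_j)$, I would rely on the observation made just before Lemma~\ref{lm:c transforms are uniformly Lipschitz}: the $j$-th $c$-transform of a vector depends only on its components with indices $k\neq j$. Thus, starting from $(f_1,\dots,f_N)$, computing $f_j^c$, forming the new vector $(f_1,\dots,f_{j-1},f_j^c,f_{j+1},\dots,f_N)$, and then taking the $j$-th $c$-transform of this new vector yields exactly the expression
\[
\inf_{z_k\in D_k,\,k\neq j}\left\{c(z_1,\dots,z_{j-1},x_j,z_{j+1},\dots,z_N)-\sum_{k\neq j}f_k(z_k)\right\}
\]
that was used to define $f_j^c(x_j)$ in the first place, because the function in the $j$-th slot simply never appears in the formula. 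Hence $(f_j^c)^c=f_j^c$.

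For the pointwise bound $f_j(x_j)\leq f_j^c(x_j)$ under the hypothesis $(f_1,\dots,f_N)\in \mathcal{K}$, my plan is to fix $x_j\in D_j$, apply the admissibility inequality at the point $(z_1,\dots,z_{j-1},x_j,z_{j+1},\dots,z_N)$ for arbitrary $z_k\in D_k$ with $k\neq j$, and rearrange to get
\[
f_j(x_j)\leq c(z_1,\dots,z_{j-1},x_j,z_{j+1},\dots,z_N)-\sum_{k\neq j}f_k(z_k).
\]
Taking the infimum over all admissible choices of $(z_k)_{k\neq j}$ gives exactly the defining expression of $f_j^c(x_j)$, and the inequality follows.

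I do not anticipate any genuine obstacle; both parts are formal consequences of the definitions, and the only care needed is bookkeeping the indices when $x_j$ is inserted in the $j$-th slot and the remaining arguments are varied. The conceptual significance of the lemma is that the operation $f_j\mapsto f_j^c$ (with the other components held fixed) is idempotent and produces an upper envelope of $f_j$ within the admissible class $\mathcal{K}$, which will be the key feature exploited in the duality arguments that follow.
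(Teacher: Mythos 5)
Your proposal is correct and follows essentially the same route as the paper: the idempotence is precisely the observation that the $j$-th $c$-transform never involves the $j$-th component of the vector, and the inequality comes from rearranging the admissibility condition and taking the infimum over the remaining variables. No gaps.
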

\begin{proof}
If we consider the vector  
$\(f_1,\cdots , f_{j-1},f_j^c, f_{j+1},\cdots ,f_N\)$, then the $j$-th $c$ transform of this vector equals the $j$-th $c$ transform of the vector $\(f_1,\cdots , f_{j-1},f_j, f_{j+1},\cdots ,f_N\)$ because by definition
\[
(f_j^c)^c(x_j)=\inf_{z_k\in D_k, k\neq j}\left\{ c\(z_1,\cdots ,z_{j-1},x_j,z_{j+1},\cdots ,x_N\)-\sum_{k\neq j}f_k(x_k)\right\}
=f_j^c(x_j).
\]

If $(f_1,\cdots ,f_N)\in \mathcal K$, $f_j(x_j)\leq c(x_1,\cdots ,x_n)-\sum_{k\neq j} f_k(x_k)$ and taking infimum over $x_k\in D_k$ with $k\neq j$ the desired inequality follows.
\end{proof}

\begin{definition}
The vector $(f_1,\cdots ,f_N)\in C(D_1)\times \cdots \times C(D_N)$ is $c$-conjugate if $f_j=f_j^c$ for $1\leq j\leq N$.
\end{definition}


\begin{definition}\label{def:c normal mapping}
Given $(f_1,\cdots ,f_N)\in C(D_1)\times \cdots \times C(D_N)$ $c$-conjugate, the $c$-normal mapping (or $c$-superdifferential) of $f_i$ is given by 
\begin{align*}
\partial_cf_i(x_i)&=\left\{(z_1,\cdots ,z_{i-1},z_{i+1},\cdots ,z_N)\in D_1\times \cdots \times D_{i-1}\times D_{i+1}\times \cdots \times D_N:\right.\\
&\qquad f_1(z_1)+ \cdots + f_{i-1}(z_{i-1})+f_i(x_i)+ f_{i+1}(z_{i+1}) \cdots f_N(z_N)
=c(z_1,\cdots ,z_{i-1},x_i,z_{i+1},\cdots ,z_N)
\}.
\end{align*}
Clearly, $\partial_cf_i$ is a multi-valued map from $D_i$ into the subsets of $D_1\times \cdots \times D_{i-1}\times D_{i+1}\times \cdots \times D_N$.
\end{definition}

Let $\mu_1$ be a Borel measure in $D_1$. We introduce the following single-valued a.e. condition for the cost $c$:
\paragraph{\bf SV}\label{hyp:SV}
\textit{For each $(f_1,\cdots ,f_N)\in C(D_1)\times \cdots \times C(D_N)$ $c$-conjugate, the 
$c$-normal mapping 
\[
\partial_cf_1(x_1)=\left\{(z_2,\cdots ,z_N)\in D_2\times \cdots \times D_N:
 f_1(x_1)+ f_2(z_2)+\cdots + f_N(z_N)
=c(x_1,z_2,\cdots ,z_N)
\right\}
\]
is single-valued for a.e. $x_1\in D_1$ in the measure $\mu_1$.}


\begin{remark}\label{rmk:c normal map of c conjugate is always non empty}\rm
Notice that if $(f_1,\cdots ,f_N)$ is $c$-conjugate, then $\partial_cf_i(x_i)\neq \emptyset$ for each $x_i\in D_i$, $1\leq i\leq N$.
Because 
\begin{align*}
f_i(x_i)=f_1^c(x_i)&=
\inf_{z_k\in D_k,k\neq i}\(c\(z_1,\cdots ,z_{i-1},x_i, z_{i+1},\cdots ,z_N\)-\sum_{k\neq i}f_k(z_k) \)\\
&=c\(\bar z_1,\cdots ,\bar z_{i-1},x_i, \bar z_{i+1},\cdots ,\bar z_N\)-\sum_{k\neq i}f_k(\bar z_k)
\end{align*}
for some $\(\bar z_1,\cdots ,\bar z_{i-1}, \bar z_{i+1},\cdots ,\bar z_N\)\in D_1\times \cdots \times D_{i-1}\times D_{i+1}\times \cdots \times D_N$ and so $\(\bar z_1,\cdots ,\bar z_{i-1}, \bar z_{i+1},\cdots ,\bar z_N\)\in \partial_cf_i(x_i)$.\end{remark}

\begin{proposition}\label{prop:twist implies SV}
If $D_j$ are domains in $\R^n$, $|\partial D_1|=0$, $c\in C^1(D_1\times \cdots \times D_N)$, and the map $(x_2,\cdots ,x_N)\mapsto \nabla_{x_1}c(x_1,x_2,\cdots ,x_N)$ from $D_2\times \cdots \times D_N$ to $\R^n$ is injective for each $x_1\in D_1$, then the cost $c$ satisfies the single valued Lebesgue a.e. condition \nameref{hyp:SV}.
\end{proposition}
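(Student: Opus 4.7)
The plan is to exploit the fact that for a $c$-conjugate vector $(f_1,\ldots,f_N)$, the function $f_1=f_1^c$ is itself Lipschitz (by Lemma~\ref{lm:c transforms are uniformly Lipschitz}), hence differentiable almost everywhere by Rademacher's theorem. Combining this with $|\partial D_1|=0$, almost every $x_1\in D_1$ is an interior differentiability point of $f_1$; I will show that at each such point, $\partial_c f_1(x_1)$ consists of a single element.

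First I would fix such an $x_1$ and pick any $(z_2,\ldots,z_N)\in\partial_c f_1(x_1)$. Because $f_1=f_1^c$, by the definition of the $c$-transform we have the one-sided inequality
\[
f_1(x)\leq c(x,z_2,\ldots,z_N)-\sum_{k\neq 1}f_k(z_k) \quad \text{for all } x\in D_1,
\]
while equality holds at $x=x_1$ by the definition of $\partial_c f_1$. Consequently the auxiliary function
\[
h(x):=c(x,z_2,\ldots,z_N)-\sum_{k\neq 1}f_k(z_k)-f_1(x)
\]
is nonnegative on $D_1$ and attains the value $0$ at the interior point $x_1$.

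Since $c\in C^1$ and $f_1$ is differentiable at $x_1$, the function $h$ is differentiable at $x_1$ and must satisfy $\nabla h(x_1)=0$, giving
\[
\nabla_{x_1}c(x_1,z_2,\ldots,z_N)=\nabla f_1(x_1).
\]
The injectivity hypothesis on $(x_2,\ldots,x_N)\mapsto \nabla_{x_1}c(x_1,x_2,\ldots,x_N)$ for each fixed $x_1$ then forces $(z_2,\ldots,z_N)$ to be uniquely determined by $\nabla f_1(x_1)$. Hence $\partial_c f_1(x_1)$ is single-valued at every interior differentiability point $x_1$, i.e., Lebesgue almost everywhere on $D_1$, which is precisely condition~\nameref{hyp:SV}.

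There is no real obstacle: the argument is a standard first-order condition at an interior minimum of the Lipschitz function $h$, coupled with Rademacher's theorem and the boundary-negligibility hypothesis $|\partial D_1|=0$. The only thing worth double-checking is that the normalization in the definition of $\partial_c f_1$ (with $f_1(x_1)$ appearing explicitly rather than $f_1^c(x_1)$) really does give equality $h(x_1)=0$, which follows immediately from the $c$-conjugacy assumption $f_1=f_1^c$.
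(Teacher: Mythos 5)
Your proof is correct and takes essentially the same route as the paper: Rademacher's theorem applied to the Lipschitz function $f_1=f_1^c$, the vanishing of the gradient of an auxiliary function at an interior differentiability point $x_1$, and then the injectivity of $(x_2,\ldots,x_N)\mapsto\nabla_{x_1}c(x_1,x_2,\ldots,x_N)$ to force any two elements of $\partial_c f_1(x_1)$ to coincide. The only cosmetic difference is that you get the first-order identity $\nabla f_1(x_1)=\nabla_{x_1}c(x_1,z_2,\ldots,z_N)$ directly from $f_1=f_1^c$ (minimality of your $h$ at $x_1$), whereas the paper reaches the same identity through the relation $f_N=f_N^c$; your derivation is slightly more direct, but the mechanism is identical.
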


\begin{proof} Let $(f_1,\cdots ,f_N)$ be $c$-conjugate. Then by Lemma \ref{lm:c transforms are uniformly Lipschitz} and Rademacher's theorem  there exists $N\subset D_1$ of measure zero such that $f_1$ is differentiable in $D_1\setminus N$.
Let $x_1\in D_1\setminus (N\cup \partial D_1)$ 
and let $X=(x_2,\cdots ,x_N),Y=(y_2,\cdots ,y_N)$ be points in $\partial_cf_1(x_1)$.
Then
\[
f_1(x_1)+f_2(x_2)+\cdots +f_N(x_N)=c(x_1,x_2,\cdots ,x_N),
\]
and 
\[
f_1(x_1)+f_2(y_2)+\cdots +f_N(y_N)=c(x_1,y_2,\cdots ,y_N).
\]
Since $(f_1,\cdots ,f_N)$ is $c$-conjugate,
we have $f_N=f_N^c$ so 
\begin{align*}
&f_1(x_1)+f_2(x_2)+\cdots +f_{N-1}(x_{N-1})\\
&\qquad +\inf_{z_1\in D_1, \cdots z_{N-1}\in D_{N-1}} \(c(z_1,\cdots ,z_{N-1},x_N)-f_1(z_1)-\cdots -f_{N-1}(z_{N-1})\)=c(x_1,x_2,\cdots ,x_N), 
\end{align*}
and 
\begin{align*}
&f_1(x_1)+f_2(y_2)+\cdots +f_{N-1}(y_{N-1})\\
&\qquad +\inf_{z_1\in D_1, \cdots z_{N-1}\in D_{N-1}} \(c(z_1,\cdots ,z_{N-1},y_N)-f_1(z_1)-\cdots -f_{N-1}(z_{N-1})\)=c(x_1,y_2,\cdots ,y_N), 
\end{align*}
which implies
\[
f_1(x_1)+f(x_2)+\cdots +f_{N-1}(x_{N-1})-c(x_1,x_2,\cdots ,x_N)
\geq 
f_1(z_1)+\cdots +f_{N-1}(z_{N-1})-c(z_1,\cdots ,z_{N-1},x_N) 
\] 
and 
\[
f_1(x_1)+f(y_2)+\cdots +f_{N-1}(y_{N-1})-c(x_1,y_2,\cdots ,y_N)
\geq 
f_1(z_1)+\cdots +f_{N-1}(z_{N-1})-c(z_1,\cdots ,z_{N-1},y_N) 
\] 
for all $z_1\in D_1,\cdots , z_{N-1}\in D_{N-1}$.
It follows that the maximum of 
\[
f_1(z_1)+\cdots +f_{N-1}(z_{N-1})-c(z_1,\cdots ,z_{N-1},x_N)
\] 
in $D_1\times \cdots \times D_{N-1}$ is attained at $z_i=x_i$, $1\leq i\leq N-1$, 
and the maximum of 
\[
f_1(z_1)+\cdots +f_{N-1}(z_{N-1})-c(z_1,\cdots ,z_{N-1},y_N)
\] 
in $D_1\times \cdots \times D_{N-1}$ is attained at $z_1=x_1$ and $z_i=y_i$, $2\leq i\leq N-1$.

Since $c$ is $C^1$ and $x_1$ is a differentiability point of $f_1$ in the interior of $D_1$ we get 
\[
\nabla_{z_1}f_1(x_1)-\nabla_{z_1}c(x_1,x_1,\cdots ,x_{N-1},x_N)=0,  
\]
and 
\[
\nabla_{z_1}f_1(x_1)-\nabla_{z_1}c(x_1,y_2,\cdots ,y_{N-1},y_N)=0  
\]
and since the map $(x_2,\cdots ,x_N)\mapsto \nabla_{x_1}c(x_1,x_2,\cdots ,x_N)$ is injective we obtain $X=Y$ as desired.

\end{proof}
\begin{lemma}
Suppose the cost $c$ satisfies \eqref{eq:cost is Lipschitz} and the single-valued a.e. condition \nameref{hyp:SV}.
Given $(f_1,\cdots ,f_N)\in C(D_1)\times \cdots \times C(D_N)$ $c$-conjugate, consider the class 
\[
\mathcal C=\left\{E\subset D_2\times \cdots \times D_N: \(\partial_cf_1\)^{-1}(E) \text{ is a $\mu_1$-measurable subset of $D_1$} \right\}.\footnote{$\(\partial_cf_1\)^{-1}(E)=\{x_1\in D_1: \partial_cf_1(x_1)\cap E\neq \emptyset\}$.}
\]
Then the class $\mathcal C$ is a $\sigma$-algebra containing all Borel subsets of $D_2\times \cdots \times D_N$. 
\end{lemma}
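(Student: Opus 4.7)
The plan is to verify the $\sigma$-algebra axioms for $\mathcal{C}$, then prove that every closed subset of $D_2\times \cdots\times D_N$ belongs to $\mathcal{C}$, and finally invoke that the Borel $\sigma$-algebra is generated by closed sets.

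First I would dispose of the easy axioms. Since $(\partial_c f_1)^{-1}(\emptyset)=\emptyset$, the empty set lies in $\mathcal{C}$, and Remark~\ref{rmk:c normal map of c conjugate is always non empty} gives $(\partial_c f_1)^{-1}(D_2\times \cdots \times D_N)=D_1$, so the whole space lies in $\mathcal{C}$. Countable unions follow from the set-theoretic identity $(\partial_c f_1)^{-1}\bigl(\bigcup_n E_n\bigr)=\bigcup_n (\partial_c f_1)^{-1}(E_n)$, which reduces the claim to a countable union of $\mu_1$-measurable sets.

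The main obstacle is closure under complements, because for multivalued maps one only has $(\partial_c f_1)^{-1}(E^c)\supset D_1\setminus (\partial_c f_1)^{-1}(E)$ in general. Here is where hypothesis \nameref{hyp:SV} enters: let $N_0\subset D_1$ be a $\mu_1$-null set such that for every $x_1\in D_1\setminus N_0$ there is a single point $T(x_1)$ with $\partial_c f_1(x_1)=\{T(x_1)\}$. For such $x_1$, the conditions $x_1\in (\partial_c f_1)^{-1}(E^c)$ and $x_1\notin (\partial_c f_1)^{-1}(E)$ are equivalent, giving
\[
(\partial_c f_1)^{-1}(E^c)\setminus N_0 \;=\; \bigl(D_1\setminus (\partial_c f_1)^{-1}(E)\bigr)\setminus N_0.
\]
Working with the completion of $\mu_1$, any subset of $N_0$ is $\mu_1$-measurable, so $(\partial_c f_1)^{-1}(E^c)$ is $\mu_1$-measurable whenever $(\partial_c f_1)^{-1}(E)$ is. This finishes the proof that $\mathcal{C}$ is a $\sigma$-algebra.

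To show $\mathcal{C}$ contains the Borel $\sigma$-algebra, I would prove it contains every closed set. The key observation is that the graph
\[
\Gamma \;=\; \bigl\{(x_1,z_2,\ldots,z_N)\in D_1\times D_2\times \cdots\times D_N :\; (z_2,\ldots,z_N)\in \partial_c f_1(x_1)\bigr\}
\]
is closed, as a sequential limit of the defining equality $f_1(x_1^n)+f_2(z_2^n)+\cdots+f_N(z_N^n)=c(x_1^n,z_2^n,\ldots,z_N^n)$ preserves it by continuity of $c$ and the $f_j$. If $E\subset D_2\times \cdots \times D_N$ is closed, then $\Gamma\cap (D_1\times E)$ is a closed subset of the compact product, hence compact, so its projection onto $D_1$ — which is exactly $(\partial_c f_1)^{-1}(E)$ — is compact and therefore Borel, hence $\mu_1$-measurable. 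Thus $\mathcal{C}$ contains all closed sets, and being a $\sigma$-algebra it contains all Borel sets.
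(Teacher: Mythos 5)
Your proof is correct and follows essentially the same route as the paper: the complement step uses the \nameref{hyp:SV} null set exactly as the paper does (the paper's decomposition of $\(\partial_cf_1\)^{-1}(E^c)$ is just your identity in disguise), and your closed-graph-plus-projection argument for closed sets is the same compactness fact the paper establishes by a direct sequential argument for $\(\partial_cf_1\)^{-1}(K)$ with $K$ compact. No gaps; the only stylistic difference is that you make the reliance on the completion of $\mu_1$ explicit, which the paper leaves implicit.
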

\begin{proof}
It is clear that $\mathcal C$ is closed by countable unions, and  $D_2\times \cdots \times D_N\in \mathcal C$ from Remark \ref{rmk:c normal map of c conjugate is always non empty}. That $\mathcal C$ is closed by complements follows from the single-valued a.e condition \nameref{hyp:SV} writing for $E\in \mathcal C$
\[
\(\partial_cf_1\)^{-1}(E^c)=\(\(\partial_cf_1\)^{-1}(E)\)^c \cup \(\(\partial_cf_1\)^{-1}(E^c)\cap 
\(\partial_cf_1\)^{-1}(E)\).
\] 
To prove $\mathcal C$ contains the Borel sets it is enough to show that $\(\partial_cf_1\)^{-1}(K)$ is compact for each compact $K\subset D_2\times \cdots \times D_N$. 
Let $\{u_j\}_{j=1}^\infty$ be a sequence in $\(\partial_cf_1\)^{-1}(K)\subset D_1$. 
Since $D_1$ is compact, we may assume through a subsequence that $u_j\to u^0$ for some $u^0\in D_1$.
By Remark \ref{rmk:c normal map of c conjugate is always non empty} there is $(v_2^j,\cdots ,v_N^j)\in \partial_cf_1(u_j)\subset K$ and since $K$ is compact there is a subsequence $(v_2^{j_\ell},\cdots ,v_N^{j_\ell})\to (v_2^0,\cdots ,v_N^0)\in K$.
We then have 
\[
f_1\(u_{j_\ell}\)+f_2\(v_2^{j_\ell}\)+\cdots +f_N\(v_N^{j_\ell}\)
=
c\(u_\ell, v_2^{j_\ell}, \cdots , v_N^{j_\ell}\)
\] 
and since $(f_1,\cdots ,f_N)\in C(D_1)\times \cdots \times C(D_N)$ letting $\ell\to \infty$ yields 
\[
f_1(u^0)+f_2(v_2^0)+\cdots +f_N(v_N^0)
=
c\(u^0,v_2^0, \cdots ,v_N^0\),
\]
that is, $\(v_2^0, \cdots ,v_N^0\)\in \partial_cf_1(u^0)$ and so $u^0\in (\partial_cf_1)^{-1}(K)$.

\end{proof}

\setcounter{equation}{0}
\section{Optimization of the Kantorovich functional $I$}
We have finite Borel measures $\mu_j$ on $D_j$, $1\leq j\leq N$, and define the linear functional 
\[
I(f_1,\cdots ,f_N)
=
\int_{D_1}f_1(x_1)\,d\mu_1(x_1) +\cdots +\int_{D_N}f_N(x_N)\,d\mu_N(x_N).
\] 
\begin{definition}
Let $2\leq j\leq N$ and let $T_j$ be a multi-valued map from $D_1$ to $D_j$.
We say that $T_j$ is measure preserving $(\mu_1,\mu_j)$ if $T_j^{-1}(E)$ is a $\mu_1$-measurable subset of $D_1$ and
\[
\mu_1\(T_j^{-1}(E)\)=\mu_j(E)
\]
for each Borel set $E\subset D_j$.
\end{definition}
We have the following.
\begin{lemma}\label{lm:measure preserving implies maximizer}
Let $(f_1,\cdots ,f_N)\in C(D_1)\times \cdots \times C(D_N)$ be $c$-conjugate and let $P_j$ be the projection from $D_2\times \cdots \times D_N$ to $D_j$, $2\leq j\leq N$.
Given $x_1\in D_1$ define $T_j x_1=P_j\(\partial_c f_1(x_1)\)$ and assume the cost $c$ satisfies the single valued a.e. condition \nameref{hyp:SV}.

If $T_j$ is measure preserving $(\mu_1,\mu_j)$ for $2\leq j\leq N$, then $(f_1,\cdots ,f_N)$ maximizes the functional $I$ over $\mathcal K$.
\end{lemma}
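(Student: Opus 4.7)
The plan is a duality argument: for any competitor $(g_1,\ldots,g_N)\in\mathcal K$ I will bound $I(g_1,\ldots,g_N)$ from above and pin down $I(f_1,\ldots,f_N)$ exactly by comparing both to the single quantity
\[
J:=\int_{D_1} c(x_1,T_2x_1,\ldots,T_Nx_1)\,d\mu_1(x_1).
\]
Concretely, the goal is to prove $I(f_1,\ldots,f_N)=J$ and $I(g_1,\ldots,g_N)\le J$, from which $I(g_1,\ldots,g_N)\le I(f_1,\ldots,f_N)$ follows and the maximization claim is settled.

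For the equality $I(f_1,\ldots,f_N)=J$, the starting point is that $(f_1,\ldots,f_N)$ is $c$-conjugate, so by Remark \ref{rmk:c normal map of c conjugate is always non empty} each $\partial_c f_1(x_1)$ is nonempty, and by the single-valued hypothesis \nameref{hyp:SV} it collapses to the singleton $\{(T_2x_1,\ldots,T_Nx_1)\}$ for $\mu_1$-a.e.\ $x_1$. Definition \ref{def:c normal mapping} then gives the pointwise identity
\[
f_1(x_1)+f_2(T_2x_1)+\cdots+f_N(T_Nx_1)=c(x_1,T_2x_1,\ldots,T_Nx_1)
\]
for $\mu_1$-a.e.\ $x_1\in D_1$. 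Integrating against $d\mu_1$ and applying the push-forward identity $\int_{D_1}f_j(T_jx_1)\,d\mu_1(x_1)=\int_{D_j}f_j(x_j)\,d\mu_j(x_j)$ for each $j\ge 2$ (a consequence of the measure-preserving hypothesis on $T_j$) produces $I(f_1,\ldots,f_N)=J$.

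For the upper bound $I(g_1,\ldots,g_N)\le J$, I would evaluate the admissibility inequality satisfied by $(g_1,\ldots,g_N)\in\mathcal K$ at the specific point $(x_1,T_2x_1,\ldots,T_Nx_1)$, obtaining
\[
g_1(x_1)+g_2(T_2x_1)+\cdots+g_N(T_Nx_1)\le c(x_1,T_2x_1,\ldots,T_Nx_1)
\]
at every $x_1$ where the $T_j$'s are defined. Applying the same change of variables term by term on $j\ge 2$ and integrating in $d\mu_1$ give $I(g_1,\ldots,g_N)\le J$, completing the argument.

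The main obstacle I anticipate is justifying the change-of-variables identity $\int_{D_1}h(T_jx_1)\,d\mu_1=\int_{D_j}h\,d\mu_j$ for $h\in C(D_j)$, since $T_j$ is a priori multi-valued and is only single-valued $\mu_1$-a.e. The measure-preserving hypothesis provides the identity on characteristic functions of Borel sets, and the preceding lemma guarantees that $T_j=P_j\circ\partial_c f_1$ has $\mu_1$-measurable preimages of Borel sets, so that $h\circ T_j$ is $\mu_1$-measurable; a standard approximation of continuous $h$ by simple Borel functions then extends the identity to all of $C(D_j)$, which is exactly what both steps above require.
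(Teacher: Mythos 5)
Your proposal is correct and follows essentially the same route as the paper: the pointwise admissibility inequality for $(g_1,\ldots,g_N)$ evaluated at $(x_1,T_2x_1,\ldots,T_Nx_1)$, the equality there for $(f_1,\ldots,f_N)$ coming from the definition of $\partial_c f_1$ together with \nameref{hyp:SV}, and the change of variables $\int_{D_1}h(T_jx_1)\,d\mu_1=\int_{D_j}h\,d\mu_j$ supplied by the measure-preserving hypothesis (the paper cites \cite[Lemma 5.4]{2023-gutierrez:bookonOTandoptics} for exactly the extension from Borel sets to continuous $h$ that you sketch). Your introduction of the intermediate quantity $J$ merely reorganizes the same comparison, so no substantive difference remains.
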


\begin{proof}
Let $(g_1,\cdots ,g_N)\in \mathcal K$.
Since $\partial_c f_1$ is single valued a.e., there exists a set $S\subset D_1$ such that $\partial_c f_1(x_1)$ is a singleton for all $x_1\in D_1\setminus S$ and $\mu_1(S)=0$.
So for $x_1\in D_1\setminus S$ we have
\begin{align*}
&g_1(x_1)+g_2\(P_2\(\partial_c f_1(x_1)\)\) +\cdots + g_N\(P_N\(\partial_c f_1(x_1)\)\)\\
&\qquad 
\leq  
c\(x_1,P_2\(\partial_c f_1(x_1)\),\cdots , P_N\(\partial_c f_1(x_1)\)\)\\
&\qquad =f_1(x_1)+f_2\(P_2\(\partial_c f_1(x_1)\)\) +\cdots + f_N\(P_N\(\partial_c f_1(x_1)\)\).
\end{align*}
Integrating this inequality over $D_1\setminus S$ yields
\begin{align*}
&\int_{D_1}g_1(x_1)\,d\mu_1(x_1)+\int_{D_1} g_2\(P_2\(\partial_c f_1(x_1)\)\)\,d\mu_1(x_1) +\cdots +\int_{D_1} g_N\(P_N\(\partial_c f_1(x_1)\)\)\,d\mu_1(x_1)\\
&\qquad \leq \int_{D_1}f_1(x_1)\,d\mu_1(x_1)+\int_{D_1} f_2\(P_2\(\partial_c f_1(x_1)\)\)\,d\mu_1(x_1) +\cdots +\int_{D_1} f_N\(P_N\(\partial_c f_1(x_1)\)\)\,d\mu_1(x_1).
\end{align*}
Since $T_j$ is measure preserving $(\mu_1,\mu_j)$, $2\leq j\leq N$, we have from \cite[Lemma 5.4]{2023-gutierrez:bookonOTandoptics} that
\begin{align*}
\int_{D_1} g_j\(P_j\(\partial_c f_1(x_1)\)\)\,d\mu_1(x_1)&=\int_{D_j}g_j(x_j)\,d\mu_j(x_j) \\
\int_{D_1} f_j\(P_j\(\partial_c f_1(x_1)\)\)\,d\mu_1(x_1)&=\int_{D_j}f_j(x_j)\,d\mu_j(x_j)
\end{align*}
and therefore $I(g_1,\cdots ,g_N)\leq I(f_1,\cdots ,f_N)$.
\end{proof}

We have the following converse to Lemma \ref{lm:measure preserving implies maximizer}.

\begin{lemma}
If $(\phi_1,\cdots ,\phi_N)\in \mathcal K$ is $c$-conjugate and maximizes the functional $I$ over $\mathcal K$, $c$ satisfies the single-valued a.e. condition \nameref{hyp:SV}, and   
$T_j x_1=P_j\(\partial_c \phi_1(x_1)\)$, then
$T_j$ is measure preserving $(\mu_1,\mu_j)$ for $2\leq j\leq N$.

\end{lemma}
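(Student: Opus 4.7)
My approach is a perturbation argument that converts the maximizing property of $(\phi_1,\ldots,\phi_N)$ into an integral identity characterizing the pushforward of $\mu_1$ by $T_j$. Fix $j\in\{2,\ldots,N\}$ and a test function $g\in C(D_j)$; the goal is to prove
\[
\int_{D_j} g\,d\mu_j \,=\, \int_{D_1} g\circ T_j\,d\mu_1,
\]
from which, by Riesz representation applied to the Borel measures $(T_j)_\#\mu_1$ and $\mu_j$, the measure-preserving property follows on all Borel sets. The $\mu_1$-measurability of $T_j^{-1}(E)$ for Borel $E\subset D_j$ comes from the previous lemma: on the full-measure set where $\partial_c\phi_1$ is single-valued one has $T_j^{-1}(E)=(\partial_c\phi_1)^{-1}(P_j^{-1}(E))$, and $P_j^{-1}(E)$ is a Borel subset of $D_2\times\cdots\times D_N$.

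For $t\in\R$ close to $0$, let $G^t$ denote the vector obtained from $(\phi_1,\ldots,\phi_N)$ by replacing $\phi_j$ with $\phi_j+tg$, and set $h^t:=(G^t)_1^c$. By Lemma~\ref{lm:the c of any is in K} the vector $(h^t,\phi_2,\ldots,\phi_{j-1},\phi_j+tg,\phi_{j+1},\ldots,\phi_N)$ belongs to $\mathcal K$, and since $(\phi_1,\ldots,\phi_N)$ is $c$-conjugate we have $h^0=\phi_1^c=\phi_1$. Substituting this competitor into $I(\cdot)\le I(\phi_1,\ldots,\phi_N)$ and cancelling the unperturbed terms reduces the maximality inequality to
\[
\int_{D_1}(h^t-\phi_1)\,d\mu_1 \,+\, t\int_{D_j} g\,d\mu_j \,\le\, 0.
\]

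Next I obtain pointwise sandwich estimates on $h^t-\phi_1$. For $\mu_1$-a.e.\ $x_1$ the set $\partial_c\phi_1(x_1)$ is the singleton $\{(T_2 x_1,\ldots,T_N x_1)\}$ by \nameref{hyp:SV}. Testing the infimum defining $h^t(x_1)$ at this point gives the upper bound $h^t(x_1)\le \phi_1(x_1)-tg(T_j x_1)$; conversely, picking a minimizer $z(t)=(z_2(t),\ldots,z_N(t))$ and using $(\phi_1,\ldots,\phi_N)\in\mathcal K$ evaluated at $(x_1,z(t))$ yields $h^t(x_1)\ge \phi_1(x_1)-tg(z_j(t))$. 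In particular $\|h^t-\phi_1\|_\infty\le |t|\,\|g\|_\infty$, so $h^t\to\phi_1$ uniformly.

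The crux is the a.e.\ convergence $z_j(t)\to T_j x_1$ as $t\to 0$. Along any sequence $t_n\to 0$, compactness lets us extract a subsequential limit $z(t_n)\to z^*$; passing to the limit in
\[
h^{t_n}(x_1)=c(x_1,z(t_n))-\sum_{k\neq 1}\phi_k(z_k(t_n))-t_n g(z_j(t_n))
\]
and using $h^{t_n}(x_1)\to\phi_1(x_1)$ yields $\phi_1(x_1)=c(x_1,z^*)-\sum_{k\neq 1}\phi_k(z_k^*)$, i.e.\ $z^*\in\partial_c\phi_1(x_1)$, so \nameref{hyp:SV} forces $z_j^*=T_j x_1$. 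Every subsequential limit therefore agrees, giving full convergence, so $g(z_j(t))\to g(T_j x_1)$ a.e. The sandwich then implies $(h^t-\phi_1)/t\to -g\circ T_j$ a.e., and dominated convergence (with majorant $\|g\|_\infty$) converts the variational inequality above, divided by $\pm t$ and taken in the limits $t\to 0^\pm$, into the desired equality. The main obstacle is precisely this a.e.\ convergence of minimizers: it is the only step that genuinely uses both compactness of the $D_k$ and the single-valued hypothesis \nameref{hyp:SV}, and without it the sandwich estimates give only inequalities $-\int g\circ T_j\,d\mu_1\le \int g\,d\mu_j$ (with no matching reverse bound).
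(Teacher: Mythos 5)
Your proposal is correct and follows essentially the same route as the paper: perturb the $j$-th component by $t g$, take the first $c$-transform to build a competitor in $\mathcal K$, use maximality together with the sandwich bounds on $h^t-\phi_1$, prove a.e.\ convergence of the minimizers to the singleton $\partial_c\phi_1(x_1)$ via compactness and \nameref{hyp:SV}, and conclude by dominated convergence. The only cosmetic difference is that you pass from the integral identity for continuous test functions to measure preservation via Riesz representation (and handle measurability through the $\sigma$-algebra lemma explicitly), where the paper cites an external lemma for the same step.
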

\begin{proof}
Fix $2\leq j\leq N$, take $v_j\in C(D_j)$ and $\theta\in \R$, and consider
\[
\(\phi_1,\cdots ,\phi_{j-1}, \phi_j+\theta v_j, \phi_{j+1},\cdots , \phi_N\).
\]
By Lemma \ref{lm:the c of any is in K}
\[
\Phi(\theta):=\((\phi_1)^c_\theta,\cdots ,\phi_{j-1}, \phi_j+\theta v_j, \phi_{j+1},\cdots , \phi_N\)\in \mathcal K,
\]
where 
\[
(\phi_1)^c_\theta(x_1)
=
\inf_{z_k\in D_k, k\neq 1}
\left\{
c(x_1,z_2,\cdots ,z_N)-\(\sum_{k=2}^{j-1} \phi_k(x_k)+\phi_j(x_j)+\theta v_j(x_j)+\sum_{k=j+1}^N \phi_k(x_k)\)\right\}.
\]
Since $(\phi_1,\cdots ,\phi_N)$ is a maximizer of $I$, we have  
\begin{equation}\label{eq:I theta less than or equal the I mazimizer}
I\(\Phi(\theta)\)\leq I(\phi_1,\cdots ,\phi_N)
\end{equation}
for each $\theta\in \R$.
We claim that 
\[
\lim_{\theta\to 0} \dfrac{I\(\Phi(\theta)\)- I(\phi_1,\cdots ,\phi_N)}{\theta}
=-\int_{D_1} v_j\(P_j\(\partial_c \phi_1(x_1)\)\)\,d\mu_1(x_1)
+\int_{D_j} v_j(x_j)\,d\mu_j(x_j).
\]
If the claim holds, then from \eqref{eq:I theta less than or equal the I mazimizer} the limit must be zero and so
\begin{equation}\label{eq:measure preserving of partial c}
\int_{D_1} v_j\(P_j\(\partial_c \phi_1(x_1)\)\)\,d\mu_1(x_1)
=\int_{D_j} v_j(x_j)\,d\mu_j(x_j)
\end{equation}
for any $v_j\in C(D_j)$. Now using \cite[Lemma 5.4]{2023-gutierrez:bookonOTandoptics} we obtain that $T_j=P_j\partial_c f_1$ is measure preserving $(\mu_1,\mu_j)$. 

To prove the claim write
\[
\dfrac{I\(\Phi(\theta)\)- I(\phi_1,\cdots ,\phi_N)}{\theta}
=
\int_{D_1} \dfrac{(\phi_1)^c_\theta(x_1)-\phi_1(x_1)}{\theta}\,d\mu_1(x_1)
+
\int_{D_j} v_j(x_j)\,d\mu_j(x_j)
\]
and we shall prove that $\dfrac{(\phi_1)^c_\theta(x_1)-\phi_1(x_1)}{\theta}$ is bounded uniformly in $\theta$ for $|\theta|$ bounded and that 
\begin{equation}\label{eq:limit of difference equal subdifferential ae}
\lim_{\theta\to 0}\dfrac{(\phi_1)^c_\theta(x_1)-\phi_1(x_1)}{\theta}=-v_j\(P_j\(\partial_c \phi_1(x_1)\)\)
\end{equation}
for a.e. $x_1\in D_1$. Hence by Lebesgue dominated convergence theorem the claim follows.
By compactness 
\begin{equation}\label{eq:writing of phi1c theta with ztheta}
(\phi_1)^c_\theta(x_1)
=
c\(x_1,z_{2,\theta},\cdots ,z_{N,\theta}\)-\(\sum_{k=2}^{j-1} \phi_k(z_{k,\theta})+\phi_j(z_{j,\theta})+\theta v_j(z_{j,\theta})+\sum_{k=j+1}^N \phi_k(z_{k,\theta})\)
\end{equation}
for some $\(z_{2,\theta},\cdots ,z_{N,\theta} \)\in D_2\times \cdots \times D_N$ depending on $\theta$ and $x_1$.
Since $(\phi_1,\cdots ,\phi_N)\in \mathcal K$ is $c$-conjugate, we have $\phi_1^c=\phi_1$ and again by compactness
\begin{equation}\label{eq:formula for phi_1^c with infimum attained}
\phi_1^c(x_1)
=
c\(x_1,\bar z_2,\cdots ,\bar z_N\)-\sum_{k=2}^N \phi_k(\bar z_k),
\end{equation}
for some $\bar z_j\in D_j$, $j\neq 1$, depending on $x_1$.

Hence by definition of $\phi_1^c$
\[
(\phi_1)^c_\theta(x_1)-\phi_1(x_1)=(\phi_1)^c_\theta(x_1)-\phi_1^c(x_1)
\geq 
-\theta \,v_j\(z_{j,\theta}\).
\]
Similarly, from \eqref{eq:formula for phi_1^c with infimum attained} it follows that
\[
(\phi_1)^c_\theta(x_1)-\phi_1(x_1)=(\phi_1)^c_\theta(x_1)-\phi_1^c(x_1)
\leq 
-\theta \,v_j\(\bar z_j\).
\]
Therefore
\[
-\theta \,v_j\(z_{j,\theta}\)\leq (\phi_1)^c_\theta(x_1)-\phi_1(x_1)\leq -\theta\,v_j\(\bar z_j\) 
\]
which implies, since $v_j\in C(D_j)$, that the ratio $\dfrac{(\phi_1)^c_\theta(x_1)-\phi_1(x_1)}{\theta}$ is bounded uniformly in $\theta$ for $|\theta|$ bounded.

It remains to prove \eqref{eq:limit of difference equal subdifferential ae}.
In fact, since $T_j x_1=P_j\(\partial_c \phi_1(x_1)\)$ is single valued for a.e. there exists a set $S\subset D_1$ with $\mu_1(S)=0$ such that $T_j x_1=P_j\(\partial_c \phi_1(x_1)\)$ is a singleton for $x_1\in D_1\setminus S$.
From \eqref{eq:formula for phi_1^c with infimum attained} and since $\phi_1^c=\phi_1$, it follows that $(\bar z_2,\cdots ,\bar z_N)=\partial_c \phi_1(x_1)$ for $x_1\in D_1\setminus S$.
We claim that $\(z_{2,\theta},\cdots ,z_{N,\theta} \)\to (\bar z_2,\cdots ,\bar z_N)$ when $\theta\to 0$.
Suppose by contradiction this were not true. Then by compactness there exists a sequence $\theta_k\to 0$ and a point $(z_2^*,\cdots ,z_N^*)\neq (\bar z_2,\cdots ,\bar z_N)$ such that $\(z_{2,\theta_k},\cdots ,z_{N,\theta_k} \)\to (z_2^*,\cdots ,z_N^*)$.
Then by \eqref{eq:writing of phi1c theta with ztheta}
\[
\phi_1^c(x_1)=\lim_{\theta_k \to 0} (\phi_1)^c_{\theta_k}(x_1)
=
c\(x_1,z_2^*,\cdots ,z_N^*\)-\sum_{k\neq 1}\phi_k(z_k^*).
\]
Again, since $\phi_1^c(x_1)=\phi(x_1)$, this implies that $(z_2^*,\cdots ,z_N^*)\in \partial_c \phi_1(x_1)$, that is, $\partial_c \phi_1(x_1)$ would contain two different points which yields a contradiction for $x_1\in D_1\setminus S$.
\end{proof}

\begin{theorem}
Suppose $\mu_1(D_1)=\mu_j(D_j)$ for $2\leq j\leq N$. 
There exists $(\phi_1,\cdots ,\phi_N)\in C(D_1)\times \cdots \times C(D_N)$ $c$-conjugate such that 
\[
I\(\phi_1,\cdots ,\phi_N\)= \sup \left\{I\(u_1,\cdots ,u_N\): \(u_1,\cdots ,u_N\)\in \mathcal K\right\}:=I_0\]
\end{theorem}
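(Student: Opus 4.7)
The plan is the direct method of the calculus of variations: take a maximizing sequence in $\mathcal K$, replace each element by a $c$-conjugate vector with $I$-value at least as large (so still maximizing), use uniform Lipschitz continuity of $c$-transforms together with a translation to invoke Arzel\`a--Ascoli, and verify that both membership in $\mathcal K$ and $c$-conjugacy pass to the uniform limit.

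Given $(f_1^k,\cdots,f_N^k)\in \mathcal K$ with $I(f_1^k,\cdots,f_N^k)\to I_0$, I would iterate $c$-transforms coordinate by coordinate: set $\phi_1^k$ equal to the first $c$-transform of $(f_1^k,\cdots,f_N^k)$, then $\phi_2^k$ equal to the second $c$-transform of $(\phi_1^k,f_2^k,\cdots,f_N^k)$, and so on, ending with $(\phi_1^k,\cdots,\phi_N^k)$. Lemma \ref{lm:the c of any is in K} keeps every intermediate vector in $\mathcal K$, and Lemma \ref{lm:fj less than or equal to fjc for f in K} ensures each replacement is pointwise $\geq$ the previous entry, so $I$ does not decrease. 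Each $\phi_j^k$ is $K$-Lipschitz by Lemma \ref{lm:c transforms are uniformly Lipschitz}. The main claim is that the resulting vector is $c$-conjugate: for each coordinate $j$, Lemma \ref{lm:fj less than or equal to fjc for f in K} applied to the final vector gives that its $j$-th $c$-transform is $\geq \phi_j^k$, while plugging the inf-minimizers defining $\phi_j^k$ into the new $c$-transform and using $\phi_i^k\geq f_i^k$ for $i>j$ gives the reverse inequality.

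Because $\mu_1(D_1)=\mu_j(D_j)$ for all $j$, translating each component by constants $a_j$ with $\sum a_j=0$ preserves both $\mathcal K$ and the value of $I$. I would set $a_j=-\phi_j^k(p_j)$ for fixed base points $p_j\in D_j$, $j\geq 2$, and $a_1=-(a_2+\cdots+a_N)$; after this normalization $|\phi_j^k|\leq K\diam(D_j)$ for $j\geq 2$, while the boundedness of $I(\phi_1^k,\cdots,\phi_N^k)$ combined with $K$-Lipschitz continuity forces a uniform $L^\infty$ bound for $\phi_1^k$ as well. Arzel\`a--Ascoli then yields a subsequence converging uniformly to $(\phi_1,\cdots,\phi_N)\in C(D_1)\times\cdots\times C(D_N)$.

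Uniform convergence lets the defining inequality of $\mathcal K$ pass to the limit and gives $I(\phi_1,\cdots,\phi_N)=I_0$. Since the $c$-transform is continuous under uniform convergence of its input data (an immediate estimate yields $\|g_j^c-h_j^c\|_\infty\leq \sum_{i\neq j}\|g_i-h_i\|_\infty$), the identities $\phi_j^k=(\phi_j^k)^c$ persist in the limit, producing the desired $c$-conjugate maximizer. The hardest step is the $c$-conjugacy claim after iterating: Lemma \ref{lm:fj less than or equal to fjc for f in K} by itself only fixes the last coordinate, and the later transforms a priori disturb the fixed-point property of the earlier ones; the backward monotonicity $\phi_i^k\geq f_i^k$ is precisely what restores it.
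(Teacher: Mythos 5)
Your proposal is correct, and it reaches the conclusion by a genuinely different route in two places. First, you prove that the iterated construction already produces a $c$-conjugate vector at the level of the maximizing sequence: the $j$-th transform of the final vector is $\le \phi_j^k$ because the later coordinates were only increased ($\phi_i^k\ge f_i^k$ for $i>j$, so the infimand shrinks), while membership in $\mathcal K$ and Lemma~\ref{lm:fj less than or equal to fjc for f in K} give $\ge$; this two-sided argument is sound, and it is in fact the justification the paper leaves implicit when, at the very end, it asserts that applying the construction to the limit yields a $c$-conjugate maximizer. The paper instead never needs conjugacy along the sequence: it only carries $\mathcal K$-membership and the value of $I$ to the limit, and upgrades to conjugacy by one more round of transforms afterwards, whereas you must additionally pass the identities $\phi_j^k=(\phi_j^k)^c$ to the limit via the stability estimate for the $c$-transform (your estimate is correct). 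Second, your compactness step differs: you renormalize by constants summing to zero anchored at base points and recover the missing bound on $\phi_1^k$ from the boundedness of $I$, while the paper subtracts the minima of the first $N-1$ components, absorbs them into the last one, and then replaces that last component by its $c$-transform, whose uniform bound comes from the boundedness of $c$; the paper's variant avoids any appeal to the functional. Two small points you should make explicit: (i) your bound on $\phi_1^k$ uses $\mu_1(D_1)>0$ (if all masses vanish the statement is trivial, but say so); (ii) you need that translating by constants with $a_1+\cdots+a_N=0$ preserves $c$-conjugacy, not just $\mathcal K$ and $I$ --- this is immediate from the definition of the transform, since the subtracted constants sum to $-a_j$, but it is used when you claim the identities persist in the limit.
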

\begin{proof}

We begin with the following construction. 
Let $(u_1,\cdots ,u_N)\in \mathcal K$.

Step 1. By Lemma \ref{lm:the c of any is in K} $(u_1^c,u_2,\cdots ,u_N)\in \mathcal K$
and by Lemma \ref{lm:fj less than or equal to fjc for f in K} $u_1\leq u_1^c$.

Step 2. Take the $c$-transform of the vector $(u_1^c,u_2,\cdots ,u_N)$ and take the second coordinate of that vector denoted by $u_2^c$. Then by Lemma \ref{lm:the c of any is in K} $(u_1^c,u_2^c,\cdots ,u_N)\in \mathcal K$
and by Lemma \ref{lm:fj less than or equal to fjc for f in K} $u_2\leq u_2^c$.

Step 3. Next take the $c$-transform of the vector $(u_1^c,u_2^c, u_3, \cdots ,u_N)$ and take the third coordinate of that vector denoted by $u_3^c$. Then by Lemma \ref{lm:the c of any is in K} $(u_1^c,u_2^c,u_3^c,\cdots ,u_N)\in \mathcal K$
and by Lemma \ref{lm:fj less than or equal to fjc for f in K} $u_3\leq u_3^c$.

Continuing in this way we construct a vector $(u_1^c,u_2^c,u_3^c,\cdots ,u_N^c)\in \mathcal K$
with $u_j\leq u_j^c$ for $1\leq j \leq N$.

Now to the proof. There exists a sequence $U_k=\(u_1^k,\cdots ,u_N^k\)\in \mathcal K$ such that $I(U_k)\to I_0$ as $k\to \infty$.
Denote by $\phi_j^k=(u_j^k)^c$ for $1\leq j\leq N$ the components of the vector constructed relative to $U_k$. 
By construction $\(\phi_1^k,\cdots ,\phi_N^k\)\in \mathcal K$ and  $u_j^k\leq \phi_j^k$ for $1\leq j\leq N$.
Hence $I(U_k)\leq I\(\phi_1^k,\cdots , \phi_N^k\)\leq I_0$ and so $I\(\phi_1^k,\cdots , \phi_N^k\)\to I_0$ as $k\to \infty$.

Let $\alpha_j^k=\min_{x_j\in D_j} \phi_j^k(x_j)$ for $1\leq j\leq N-1$ and define
\[
\bar v_j^k (x_j)= \phi_j^k(x_j)-\alpha_j^k,\text{ for $1\leq j\leq N-1$ and $\bar v_N^k (x_N)=\phi_N^k(x_N)+\sum_{j=1}^{N-1}\alpha_j^k$}.
\]
Clearly, $\(\bar v_1^k ,\cdots ,\bar v_N^k\)\in \mathcal K$ and $I\(\bar v_1^k ,\cdots ,\bar v_N^k\)=I\(\phi_1^k,\cdots , \phi_N^k\)$ since $\mu_1(D_1)=\mu_j(D_j)$ for $2\leq j\leq N$. 
Also $\bar v_j^k (x_j)$ are uniformly bounded in $k$ for all $1\leq j\leq N-1$ because
\[
\bar v_j^k (x_j)= \phi_j^k(x_j)-\alpha_j^k=\phi_j^k(x_j)-\phi_j^k(z_j)
\]
for some $z_j\in D_j$ so $|\bar v_j^k (x_j)|\leq |\phi_j^k(x_j)-\phi_j^k(z_j)|\leq K\,d_j(x_j,z_j)\leq K\,\diam(D_j)\leq K_0$ by Lemma \ref{lm:c transforms are uniformly Lipschitz}.
In addition, the functions $\bar v_j^k$ are Lipschitz with a constant bounded uniformly in $k$ for $1\leq j\leq N-1$ by Lemma \ref{lm:c transforms are uniformly Lipschitz}.
 
Now take the $c$-transform of the vector $\(\bar v_1^k ,\cdots ,\bar v_N^k\)$, then the $N$-component of this transform is
\[
(\bar v_N^k)^c(x_N)=\inf_{z_k\in D_k,k\neq N}\left\{c\(z_1,\cdots,z_{N-1},x_N\)-\sum_{j=1}^{N-1}\bar v_j^k (z_j) \right\}
\] 
and since the $\bar v_j^k$ are uniformly bounded in $k$ for $1\leq j\leq  N-1$ and $c$ is bounded it follows that $(\bar v_N^k)^c$ is uniformly bounded in $k$. Also $(\bar v_N^k)^c$ is Lipschitz with a constant bounded uniformly in $k$ by Lemma \ref{lm:c transforms are uniformly Lipschitz}.
Since $\(\bar v_1^k ,\cdots ,\bar v_N^k\)\in \mathcal K$, by Lemma \ref{lm:fj less than or equal to fjc for f in K} $\bar v_N^k\leq (\bar v_N^k)^c$, and so by Lemma \ref{lm:the c of any is in K}
$\(\bar v_1^k ,\cdots ,\bar v_{N-1}^k,(\bar v_N^k)^c\)\in \mathcal K$. 
Therefore 
\[
I\(\phi_1^k,\cdots , \phi_N^k\)=I\(\bar v_1^k ,\cdots ,\bar v_N^k\)\leq I\(\bar v_1^k ,\cdots ,\bar v_{N-1}^k,(\bar v_N^k)^c\)
\]
obtaining that $I\(\bar v_1^k ,\cdots ,\bar v_{N-1}^k,(\bar v_N^k)^c\)\to I_0$.

We have then constructed the sequence $\(\bar v_1^k ,\cdots ,\bar v_{N-1}^k,(\bar v_N^k)^c\)$ in $\mathcal K$ that is uniformly bounded and equicontinuous, therefore by Arzel\`a-Ascoli's theorem it contains a subsequence uniformly convergent to some vector
$(f_1,\cdots ,f_N)$. Clearly, $(f_1,\cdots ,f_N)\in \mathcal K$ and $I(f_1,\cdots ,f_N)=I_0$.

Once again applying the construction at the beginning to $(f_1,\cdots ,f_N)$ we obtain a vector  
$(f_1^c,\cdots ,f_N^c)\in \mathcal K$ with $f_j\leq f_j^c$ so we have $I(f_1^c,\cdots ,f_N^c)\geq 
I\(f_1,\cdots ,f_N\)$. That is, $(f_1^c,\cdots ,f_N^c)$ is a maximizer of $I$ and by Lemma \ref{lm:fj less than or equal to fjc for f in K} is $c$-conjugate.
\end{proof}

\setcounter{equation}{0}
\section{Existence and uniqueness of solutions to the multi-marginal Monge problem}
\begin{theorem}\label{thm:existence of solutions to Monge problem}
Assume the cost $c$ satisfies \nameref{hyp:SV}, and $\{(D_j,d_j)\}_{j=1}^N$ are compact metric spaces with $\mu_j$ Borel measures satisfying $\mu_1(D_1)=\mu_j(D_j)$ for $2\leq j \leq N$.

Let $\mathcal S$ be the class of measure preserving maps $s:D_1\to D_2\times \cdots \times D_N$, that is, 
if $P_j$ is the projection of $D_2\times \cdots \times D_N$ onto $D_j$ the mapping $T_j=P_j\circ s$ is measure preserving $(\mu_1,\mu_j)$ for $2\leq j\leq N$, i.e., for each Borel set $E\subset D_j$, $T_j^{-1}(E)$ is a Borel subset of $D_1$ and
\[
\mu_1\(T_j^{-1}(E)\)=\mu_j(E).
\]

If $(f_1,\cdots ,f_N)\in \mathcal K$ is $c$-conjugate and maximizes the functional $I$ over $\mathcal K$, then  
\[
\inf \left\{\int_{D_1} c\(x_1, s(x_1)\)\,d\mu_1(x_1): s\in \mathcal S  \right\}
\]
is attained at $s=\partial_c f_1$.
Moreover 
\begin{equation}\label{eq:infimum monge equals max functional}
\inf \left\{\int_{D_1} c\(x_1, s(x_1)\)\,d\mu_1(x_1): s\in \mathcal S  \right\}
=\sup \{I(u_1,\cdots ,u_N): (u_1,\cdots ,u_N)\in \mathcal K\}.
\end{equation}
\end{theorem}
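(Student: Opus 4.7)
The plan is to establish a duality chain: for every admissible pair $(s, (u_1,\ldots,u_N))\in \mathcal S\times \mathcal K$ the Kantorovich value never exceeds the Monge value, and the chosen maximizer $(f_1,\ldots,f_N)$ paired with $s=\partial_c f_1$ realizes both simultaneously. Note that $\partial_c f_1$ is well-defined as a single-valued map $\mu_1$-a.e.\ by \nameref{hyp:SV}, and the preceding lemma (the converse to Lemma~\ref{lm:measure preserving implies maximizer}) shows that each $T_j = P_j\circ \partial_c f_1$ is measure preserving $(\mu_1,\mu_j)$. Hence $s_0:=\partial_c f_1\in \mathcal S$.

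First I would prove the inequality $\sup_{\mathcal K} I \leq \inf_{\mathcal S} \int c(x_1,s(x_1))\,d\mu_1$. Fix $s\in \mathcal S$ with components $T_j=P_j\circ s$, and $(u_1,\ldots,u_N)\in \mathcal K$. The admissibility of $(u_1,\ldots,u_N)$ gives, pointwise in $x_1\in D_1$,
\[
u_1(x_1)+u_2(T_2 x_1)+\cdots +u_N(T_N x_1)\leq c\bigl(x_1,T_2 x_1,\ldots,T_N x_1\bigr)=c(x_1,s(x_1)).
\]
Integrating over $D_1$ against $\mu_1$ and applying \cite[Lemma~5.4]{2023-gutierrez:bookonOTandoptics} to each $T_j$ (which turns $\int_{D_1} u_j(T_j x_1)\,d\mu_1$ into $\int_{D_j} u_j\,d\mu_j$) yields
\[
I(u_1,\ldots,u_N)\leq \int_{D_1} c(x_1,s(x_1))\,d\mu_1(x_1),
\]
and taking the appropriate sup/inf gives the first half of~\eqref{eq:infimum monge equals max functional}.

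Next I would show equality is attained at $s_0=\partial_c f_1$. By $c$-conjugacy, on the full-$\mu_1$-measure set where $\partial_c f_1$ is single-valued, the membership $T_j x_1 = P_j\partial_c f_1(x_1)$ in $\partial_c f_1(x_1)$ turns the inequality above into the identity
\[
f_1(x_1)+f_2(T_2 x_1)+\cdots +f_N(T_N x_1)=c\bigl(x_1,T_2 x_1,\ldots,T_N x_1\bigr).
\]
Integrating and again invoking \cite[Lemma~5.4]{2023-gutierrez:bookonOTandoptics} gives
\[
I(f_1,\ldots,f_N)=\int_{D_1}c(x_1,s_0(x_1))\,d\mu_1(x_1).
\]
Combined with the first step this forces $s_0$ to realize the infimum and proves equality in~\eqref{eq:infimum monge equals max functional}.

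The main obstacle is not algebraic but measure-theoretic: one has to be sure that the selection $s_0=\partial_c f_1$ is genuinely $\mu_1$-measurable and that each $u_j\circ T_j$ is a legitimate integrand, so that both the pointwise inequality above and the measure-preserving change of variables can be integrated. This is exactly where the earlier lemma on the $\sigma$-algebra $\mathcal C$ (showing that $(\partial_c f_1)^{-1}(E)$ is $\mu_1$-measurable for every Borel $E\subset D_2\times\cdots\times D_N$) together with the converse of Lemma~\ref{lm:measure preserving implies maximizer} is essential; once those tools are invoked, the duality chain closes cleanly.
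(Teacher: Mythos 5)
Your proposal is correct and follows essentially the same duality argument as the paper: the admissibility inequality for $\mathcal K$ integrated against $\mu_1$, the change of variables \cite[Lemma~5.4]{2023-gutierrez:bookonOTandoptics} via measure preservation, the fact from the converse lemma (equation~\eqref{eq:measure preserving of partial c}) that $P_j\partial_c f_1$ is measure preserving because $(f_1,\cdots,f_N)$ maximizes $I$, and the $c$-normal mapping identity to turn the inequality into an equality at $s_0=\partial_c f_1$. The only difference is cosmetic — you state the weak duality for arbitrary pairs $(s,(u_1,\cdots,u_N))$ before specializing, whereas the paper runs a single chain of (in)equalities with the maximizer $(f_1,\cdots,f_N)$ — so the substance is the same.
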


\begin{proof}
Let $s\in \mathcal S$ and write $s(x_1)=\(s_2(x_1),\cdots ,s_N(x_1)\)$. Since $(f_1,\cdots ,f_N)\in \mathcal K$ and $s$ is measure preserving
\begin{align*}
&\int_{D_1} c\(x_1, s(x_1)\)\,d\mu_1(x_1)\\
&\geq
\int_{D_1}f_1(x_1)\,d\mu_1(x_1)+\int_{D_1} f_2(s_2(x_1))\,d\mu_1(x_1)+\cdots +\int_{D_1} f_N(s_N(x_1))\,d\mu_1(x_1)\\
&=\int_{D_1}f_1(x_1)\,d\mu_1(x_1)
+\int_{D_2} f_2(x_2)\,d\mu_2(x_2)
+\cdots 
+\int_{D_N} f_N(x_N)\,d\mu_N(x_N)\\
&=\int_{D_1} f_1(x_1)\,d\mu_1(x_1)
+\int_{D_1} f_2\(P_2\(\partial_c f_1(x_1)\)\)\,d\mu_1(x_1)
+\cdots +
\int_{D_1} f_N \(P_N\(\partial_c f_1(x_1)\)\)\,d\mu_1(x_1)
\quad \text{by equation \eqref{eq:measure preserving of partial c}}\\
&=\int_{D_1} \(f_1(x_1)+f_2\(P_2\(\partial_c f_1(x_1)\)\)
+\cdots +
f_N \(P_N\(\partial_c f_1(x_1)\)\)\)\,d\mu_1(x_1)\\
&=
\int_{D_1} c\(x_1,P_2\(\partial_c f_1(x_1)\),\cdots ,P_N\(\partial_c f_1(x_1)\)\)\,d\mu_1(x_1)
\end{align*}
by definition of $c$-normal mapping.
\end{proof}

\begin{theorem}\label{thm:uniqueness of the  multimarginal problem}
Suppose $\mu_1(G)>0$ for each open set $G\subset D_1$ and the cost $c(x_1,\cdots ,x_N)$ satisfies the single valued a.e. condition \nameref{hyp:SV}. Then the mapping minimizing 
\[
\int_{D_1} c\(x_1, s(x_1)\)\,d\mu_1(x_1)
\]
over all measure preserving maps $s\in \mathcal S$ is unique.
\end{theorem}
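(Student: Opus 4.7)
The natural plan is to deploy the duality identity from Theorem~\ref{thm:existence of solutions to Monge problem} and the pointwise inequality defining $\mathcal K$ to pin down every minimizer as the single-valued selection from $\partial_c f_1$ coming from a $c$-conjugate maximizer. Concretely, I would suppose that $s^{(1)}, s^{(2)}\in \mathcal S$ both realize the infimum and show that $s^{(1)}(x_1)=s^{(2)}(x_1)$ for $\mu_1$-a.e. $x_1\in D_1$.

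\textbf{Key steps.} First, invoke the previous existence theorem to obtain a $c$-conjugate maximizer $(f_1,\ldots,f_N)\in \mathcal K$ of the functional $I$. By \eqref{eq:infimum monge equals max functional},
\[
\int_{D_1} c\bigl(x_1,s^{(i)}(x_1)\bigr)\,d\mu_1(x_1)=I(f_1,\ldots,f_N),\qquad i=1,2.
\]
Second, using that each $T_j^{(i)}=P_j\circ s^{(i)}$ is measure preserving $(\mu_1,\mu_j)$, rewrite $I(f_1,\ldots,f_N)$ as an integral over $D_1$ via \cite[Lemma 5.4]{2023-gutierrez:bookonOTandoptics}, exactly as in the proof of Theorem~\ref{thm:existence of solutions to Monge problem}:
\[
I(f_1,\ldots,f_N)=\int_{D_1}\left[f_1(x_1)+\sum_{j=2}^N f_j\bigl(P_j s^{(i)}(x_1)\bigr)\right]d\mu_1(x_1).
\]
Third, because $(f_1,\ldots,f_N)\in \mathcal K$, the integrand is pointwise bounded above by $c(x_1,s^{(i)}(x_1))$. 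Equality of the integrals combined with this pointwise inequality forces equality $\mu_1$-a.e., i.e., for $\mu_1$-a.e. $x_1$,
\[
f_1(x_1)+\sum_{j=2}^N f_j\bigl(P_j s^{(i)}(x_1)\bigr)=c\bigl(x_1,s^{(i)}(x_1)\bigr),
\]
which is precisely the statement $s^{(i)}(x_1)\in \partial_c f_1(x_1)$ $\mu_1$-a.e.

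\textbf{Conclusion.} Apply condition \nameref{hyp:SV} to the $c$-conjugate vector $(f_1,\ldots,f_N)$: there is a $\mu_1$-null set $S\subset D_1$ off of which $\partial_c f_1(x_1)$ is a singleton. Outside $S$ the two inclusions $s^{(1)}(x_1),s^{(2)}(x_1)\in \partial_c f_1(x_1)$ force $s^{(1)}(x_1)=s^{(2)}(x_1)$, giving uniqueness $\mu_1$-a.e. The strict positivity hypothesis $\mu_1(G)>0$ for every open $G\subset D_1$ enters only to ensure that this $\mu_1$-a.e. agreement is genuinely meaningful on a set dense in $D_1$, so that the minimizer is essentially uniquely determined throughout the support of $\mu_1$.

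\textbf{Main obstacle.} There is no deep obstacle: the only delicate step is the passage from equality of the integrals to $\mu_1$-a.e. pointwise equality, which uses both the admissibility inequality (holding everywhere) and the fact that $s^{(i)}$ is a minimizer (to make the integrals coincide). Everything else is bookkeeping, reusing the $c$-conjugacy identity and the definition of $\partial_c f_1$ already exploited in Theorem~\ref{thm:existence of solutions to Monge problem}.
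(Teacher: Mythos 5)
Your proposal is correct and follows essentially the same route as the paper: use the duality identity \eqref{eq:infimum monge equals max functional} together with the admissibility inequality and the measure-preserving change of variables to force the $\mu_1$-a.e.\ equality $f_1(x_1)+\sum_{j\ge 2} f_j(P_j s(x_1))=c(x_1,s(x_1))$, i.e.\ $s(x_1)\in\partial_c f_1(x_1)$ a.e., and then conclude with \nameref{hyp:SV}. The only cosmetic difference is that you compare two arbitrary minimizers while the paper identifies each minimizer directly with $\partial_c f_1$, and your remark that the strict positivity of $\mu_1$ plays only a secondary role is consistent with the argument.
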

\begin{proof}
From Theorem \ref{thm:existence of solutions to Monge problem}, let $\partial_c f_1$ be the $c$-normal mapping associated with the $c$-conjugate function $(f_1,\cdots ,f_N)\in \mathcal K$ that minimizes the functional $I$ over $\mathcal K$.
Let $s_0(x_1)=\(s_2(x_1),\cdots ,s_N(x_1)\)$ (defined a.e.) be with $s_j$ measure preserving $(\mu_1,\mu_j)$ that also minimizes the integral.
Since $(f_1,\cdots ,f_N)\in \mathcal K$, we have 
\[
f_1(x_1)+f_2(s_2(x_1))+\cdots +f_N(s_N(x_1))\leq c\(x_1,s_0(x_1)\)
\]
and hence
\begin{align*}
0
&\leq
\int_{D_1}\left\{c\(x_1,s_0(x_1)\)-\(f_1(x_1)+f_2(s_2(x_1))+\cdots +f_N(s_N(x_1))\) \right\}\,d\mu_1\\
&=\int_{D_1}c\(x_1,s_0(x_1)\)\,d\mu_1
-
\int_{D_1}f_1(x_1)\,d\mu_1 - \int_{D_1} f_2(s_2(x_1))\,d\mu_1-\cdots - \int_{D_1} f_N(s_N(x_1))\,d\mu_1\\
&=\int_{D_1}c\(x_1,s(x_1)\)\,d\mu_1
-
\int_{D_1}f_1(x_1)\,d\mu_1 - \int_{D_2} f_2(x_2)\,d\mu_2-\cdots - \int_{D_N} f_N(x_N)\,d\mu_N\\
&=\inf \left\{\int_{D_1} c\(x_1, s(x_1)\)\,d\mu_1(x_1): s\in \mathcal S  \right\}
-
\int_{D_1}f_1(x_1)\,d\mu_1 - \int_{D_2} f_2(x_2)\,d\mu_2-\cdots - \int_{D_N} f_N(x_N)\,d\mu_N\\
&\leq 0
\end{align*}
since $s_i$ is measure preserving and from \eqref{eq:infimum monge equals max functional}.
Since $\mu_1(G)>0$ for each $G$ open it follows that  
\[
\(f_1(x_1)+f_2(s_2(x_1))+\cdots +f_N(s_N(x_1))\)=c\(x_1,s_0(x_1)\)
\]
for $x_1\in D_1\setminus N$ with $\mu_1(N)=0$. That is, $s_0(x_1)\in \partial_cf_1(x_1)$ for all $x_1\in D_1\setminus N$. But from the single valued assumption on the $c$-normal mapping there is $N'\subset D_1$ such that $\mu_1(N')=0$ with $\partial_cf_1(x_1)$ a singleton for all $x_1\in D_1\setminus N'$.
Therefore, for each $x_1\in D_1\setminus (N\cup N')$ we obtain $s_0(x_1)=\partial_cf_1(x_1)$ proving the uniqueness.

\end{proof}

As a corollary we obtain the following representation formula.

\begin{theorem}\label{thm:representation formula for multimarginal}
Suppose $D_i\subset \R^n$ are compact domains, $\mu_i$ are Borel measures in $D_i$, $\mu_i(D_i)=\mu_1(D_1)$, $1\leq i\leq N$, $\mu_1(\partial D_1)=0$, $\mu_1(G)>0$ for each $G\subset D_1$ open, and 
$c:D_1\times \cdots \times D_N\to \R$ is a $C^1$ cost function such that the map $(x_2,\cdots ,x_N)\mapsto \nabla_{x_1}c(x_1,x_2,\cdots ,x_N)$ is injective for each $x_1\in D_1$.
If $(f_1,\cdots ,f_N)\in \mathcal K$ is $c$-conjugate and a maximizer of the functional $I$ over $\mathcal K$, then the optimal map $T_2,\cdots ,T_N$ for the multi-marginal Monge problem satisfies 
\begin{equation}\label{eq:representation formula for multimarginal}
\nabla_{x_1}f_1(x_1)=\nabla_{x_1}c\(x_1,T_2x_1,\cdots ,T_Nx_1\)
\end{equation} 
for a.e. $x_1\in D_1$.
\end{theorem}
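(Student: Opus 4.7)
The plan is to derive the representation formula from the first-order optimality condition that $f_1$ must satisfy at any interior differentiability point, together with the single-valuedness of the $c$-normal mapping. The preliminary work is to assemble the relevant facts already proved in the paper: Proposition~\ref{prop:twist implies SV} guarantees that the injectivity hypothesis implies condition \nameref{hyp:SV}; Theorem~\ref{thm:existence of solutions to Monge problem} and Theorem~\ref{thm:uniqueness of the  multimarginal problem} then identify the optimal map as $T_j x_1 = P_j\bigl(\partial_c f_1(x_1)\bigr)$ with $\partial_c f_1(x_1)$ single-valued for $\mu_1$-a.e. $x_1$. Since $f_1 = f_1^c$ by $c$-conjugacy, Lemma~\ref{lm:c transforms are uniformly Lipschitz} makes $f_1$ $K$-Lipschitz on $D_1$, so Rademacher's theorem gives differentiability of $f_1$ at Lebesgue-a.e. $x_1$; combining with $\mu_1(\partial D_1)=0$, I obtain a set of full $\mu_1$-measure consisting of interior points at which $f_1$ is differentiable and $\partial_c f_1$ is a singleton.

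The core step is then a one-line variational argument. Fix such an $x_1$ and set $Y=(T_2x_1,\dots,T_Nx_1)$. Since $(f_1,\dots,f_N)\in\mathcal{K}$, for every $y_1\in D_1$
\[
f_1(y_1)+\sum_{j=2}^{N} f_j(T_j x_1) \leq c(y_1,Y),
\]
and because $Y\in\partial_c f_1(x_1)$ the definition of the $c$-normal mapping gives equality at $y_1=x_1$. Hence the function $\Psi(y_1):=c(y_1,Y)-f_1(y_1)$ attains its minimum on $D_1$ at the interior point $y_1=x_1$. Since $c$ is $C^1$ and $f_1$ is differentiable at $x_1$, the first-order condition $\nabla_{y_1}\Psi(x_1)=0$ rearranges to
\[
\nabla_{x_1} f_1(x_1)=\nabla_{x_1} c\bigl(x_1,T_2x_1,\dots,T_Nx_1\bigr),
\]
which is exactly \eqref{eq:representation formula for multimarginal}.

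The main obstacle I anticipate is not the variational identity itself (which is standard once the setup is in place) but the bookkeeping that reconciles the Lebesgue-a.e. differentiability provided by Rademacher with the $\mu_1$-a.e. statement of the theorem. The cleanest way to finesse this is to observe that the proof of Proposition~\ref{prop:twist implies SV} already discards the Rademacher null set, so one may take a single exceptional set (the union of $\partial D_1$, the Rademacher null set, and the set of non-single-valuedness) outside of which the argument goes through; provided $\mu_1$ does not charge Lebesgue null sets, this exceptional set is $\mu_1$-null and the conclusion holds on its complement.
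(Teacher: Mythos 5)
Your proposal is correct and follows essentially the same route as the paper: identify the optimal map with the single-valued $c$-normal mapping $\partial_c f_1$ via Proposition~\ref{prop:twist implies SV} and Theorems~\ref{thm:existence of solutions to Monge problem}--\ref{thm:uniqueness of the multimarginal problem}, then obtain \eqref{eq:representation formula for multimarginal} as the first-order condition at an interior differentiability point of the Lipschitz function $f_1$. The only cosmetic difference is that you get the extremality of $y_1\mapsto c(y_1,T_2x_1,\cdots ,T_Nx_1)-f_1(y_1)$ directly from $(f_1,\cdots ,f_N)\in\mathcal K$, whereas the paper routes it through the conjugacy relation $f_N=f_N^c$; your version is marginally cleaner and the Lebesgue-versus-$\mu_1$ bookkeeping you flag is handled (implicitly) the same way in the paper.
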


\begin{proof}
From the assumption on $c$ it follows from Propositionl \ref{prop:twist implies SV} that $c$ satisfies \nameref{hyp:SV} and therefore by application of Theorems \ref{thm:existence of solutions to Monge problem} and \ref{thm:uniqueness of the  multimarginal problem}, the optimal map $(T_2,\cdots ,T_N)$ for the multi-marginal Monge problem is the $c$-normal mapping of the function $f_1$, i.e., $\partial_c f_1(x_1)=(T_2x_1,\cdots ,T_Nx_1)$ for  $x_1\in D_1\setminus N$ with $\mu_1(N)=0$.
For $x_1\in D_1\setminus N$, let $(x_2,x_3, \cdots ,x_N)$ be the unique point in $\partial_c f_1(x_1)$. 
Then
\[
f_1(x_1)+f_2(x_2)+\cdots +f_N(x_N)=c(x_1,x_2,\cdots ,x_N).
\]
Since $(f_1,\cdots ,f_N)\in \mathcal K$ is $c$-conjugate,
we have $f_N=f_N^c$ so 
\begin{align*}
&f_1(x_1)+\cdots +f_{N-1}(x_{N-1})\\
&\qquad +\inf_{z_1\in D_1, \cdots z_{N-1}\in D_{N-1}} \(c(z_1,\cdots ,z_{N-1},x_N)-f_1(z_1)-\cdots -f_{N-1}(z_{N-1})\)=c(x_1,x_2,\cdots ,x_N), 
\end{align*}
which implies
\[
f_1(x_1)+\cdots +f_{N-1}(x_{N-1})+
c(z_1,\cdots ,z_{N-1},x_N)-f_1(z_1)-\cdots -f_{N-1}(z_{N-1})\geq c(x_1,x_2,\cdots ,x_N)
\]
for all $z_1\in D_1,\cdots , z_{N-1}\in D_{N-1}$
that is,
\[
f_1(x_1)+\cdots +f_{N-1}(x_{N-1})-c(x_1,x_2,\cdots ,x_N)
\geq 
f_1(z_1)+\cdots +f_{N-1}(z_{N-1})-c(z_1,\cdots ,z_{N-1},x_N) 
\] 
It follows that the maximum of 
\[
f_1(z_1)+\cdots +f_{N-1}(z_{N-1})-c(z_1,\cdots ,z_{N-1},x_N)
\] 
in $D_1\times \cdots \times D_{N-1}$ is attained at $z_i=x_i$, $1\leq i\leq N-1$. Since $c$ is $C^1$ and assuming $x_1$ is a differentiability point of $f_1$ in the interior of $D_1$ we get 
\[
\nabla_{x_1}f_1(z_1)-\nabla_{z_1}c(z_1,\cdots ,z_{N-1},x_N)=0,  \text{ when $z_i=x_i$, $1\leq i\leq N_1$}
\]
which proves the theorem.

\end{proof}

\section{Application of the Multi marginal Monge problem in Optics}
\label{subsec:optics application of multivariate monge}

In this section, we present an application of the multi-marginal Monge problem to optics. More specifically, our goal is to design a metalens that simultaneously refracts and reflects light waves to achieve prescribed energy distributions.

Metalenses, or metasurfaces, are ultra-thin optical devices constructed from nano-structures designed to focus light for imaging purposes. They can be modeled as a pair $(\Gamma, \Phi)$, where $\Gamma$ is a surface in three-dimensional space given by the graph of a $C^2$ function $f$, and $\Phi$ is a $C^1$ function defined in a neighborhood of $\Gamma$, known as the phase discontinuity.

Using such a phase, the nano-structures are engineered by adjusting their shape, size, position, and orientation—typically arranged on the surface as tiny pillars, rings, or other geometric configurations. These structures work together to manipulate light waves as they pass through the surface. Unlike conventional lenses, which rely on a gradual accumulation of phase as the wave propagates through the bulk of the lens, metasurfaces introduce abrupt phase changes along the optical path to reshape the scattered wave.

This topic is an active area of research in engineering and has significant potential for applications in optics. For further discussion in the engineering literature, see for example \cite{science-runner-ups-2016}, \cite{2107planaroptics:capasso}, and \cite{Chen_broadband}.

We conclude this introduction by outlining the organization of the section. In Section \ref{subsec:generalized Snell law}, we introduce the generalized Snell’s laws for refraction and reflection. Then, in Section \ref{subsec:design of reflecting-refracting metasurfaces}, we describe and solve the optical design problem under consideration and explain its connection to the multi-marginal Monge problem. The associated cost function is analyzed in Section \ref{analysisofmultimarginalcost}, where we show that, under suitable assumptions on the parameters, the cost satisfies injectivity conditions. These allow us to apply results from previous sections to conclude the existence of solutions to our optical design problem, Theorem \ref{thm:main result representation of the phase}.

\subsection{Generalized Snell's Law}\label{subsec:generalized Snell law}

Refraction in metalenses follow the generalized Snell's law which gives a relationship between the direction of the incident, and the refracted (or transmitted) light rays, the refractive indices of the media where these rays travel, and the gradient of the phase function. 
The law is precisely described as follows. 
Let $\Gamma$ be a surface in $\mathbb{R}^3$ as above that separates two media, with refractive indices $n_1$ and $n_2$, respectively. Let the incoming unit direction of a ray striking the surface $\Gamma$ at $(x, f(x))$ be $\textbf{e}(x)$, and the unit direction of {\it refracted or transmitted} ray from that point be $\textbf{m}(x)$. 

The generalized Snell's law of refraction says that
\begin{equation}\label{eq:GSL}
n_1{\bf e}(x) -  \,n_2{\bf m}(x)=\lambda \,\nu(x)+\nabla \Phi(x,f(x)),
\end{equation}
for some $\lambda \in \R$; where $\nu(x)$ is the unit normal vector to surface $\Gamma$ at the incident point $(x,f(x))$. 
 
Similarly, there is also a generalized Snell's law of {\it reflection}. In fact, since both the incoming and reflected rays in this case travel in medium $I$, we have that 
\begin{equation}\label{eq:GSLreflection}
n_1\,{\bf e}(x) - n_1 \,{\bf r}(x)=\lambda \,\nu(x)+\nabla \Phi(x,f(x)),
\end{equation}
for some $\lambda\in \R$.
For a proof of these equations we refer to \cite{gps} and \cite{gutierrez-sabra:chromaticaberrationinmetasurfaces}. 

\subsection{Design of reflecting-refracting metalenses}\label{subsec:design of reflecting-refracting metasurfaces}
Let $\Omega_0,\Omega_1,\Omega_2\subset \R^2$ be compact domains with $\Omega_1,\Omega_2$ convex, and Borel measures $\rho_i$ in $\Omega_i$, $i = 0,1,2$, satisfying 
\begin{equation}\label{eq:conservation of energy multi}
\int_{\Omega_0}\rho_0(x)\,dx=\int_{\Omega_1}\rho_1(x)\,dx=\int_{\Omega_2}\rho_2(x)\,dx.
\end{equation}
In $\R^3$, we have the planes $z=0$, $z=\beta$, and a smooth surface $S_1$ with equation $z=f(x)$, at a positive distance from both planes and lying in between both planes. 
A material with refractive index $n_1$ fills the region between the plane $z=0$ and the surface $S_1$, and a material with refractive index $n_2$ fills the region between the surfaces $S_1$ and $z=\beta$. 
The problem that we consider and solve is the following.
\begin{figure}[t]
    \centering
    \includegraphics[scale=0.45]{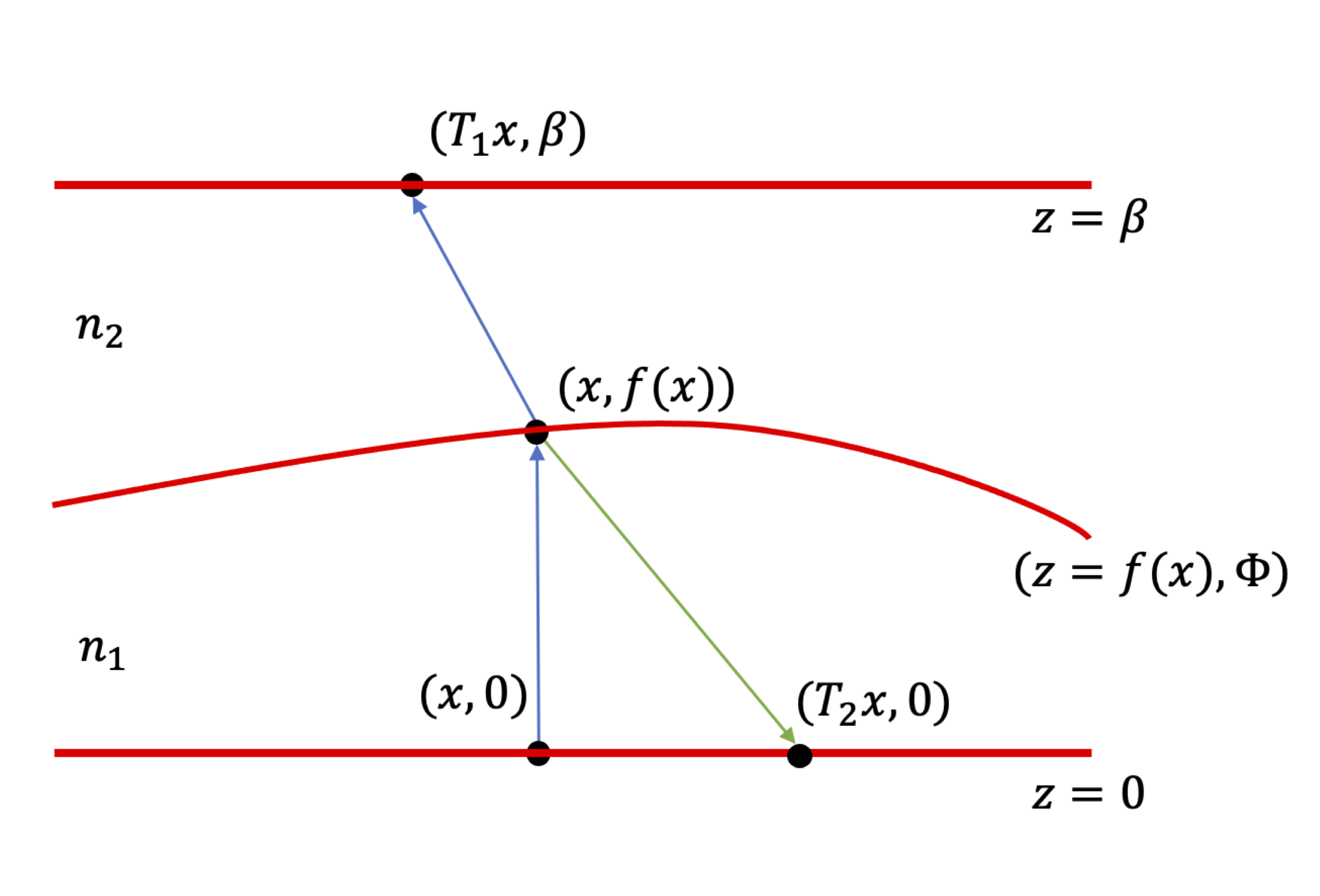}
    \caption{Refracting-reflecting metalens}
    \label{fig:refraction-reflaction}
\end{figure}

{\it We seek a phase function $\Phi$ on the surface $z=f(x)$ doing the following.}  
A ray emitted from $x=(x_1,x_2)\in \Omega_0$ with direction $\textbf{e}(x)=(0,0,1)$ strikes the surface $z=f(x)$ at a point $\(x,f(x)\)$. 
Since on $z=f(x)$ a phase $\Phi$ is defined, the ray is refracted into a unit direction ${\bf m}(x)=(m_1(x),m_2(x),m_3(x))$, $m_3(x)>0$ in accordance with the generalized Snell law (GSL) of refraction \eqref{eq:GSL} depending on the phase $\Phi$, and reaching a point $(T_1x,\beta)$ on the plane $z=\beta$ with $Tx\in \Omega_1$.
Each $\Phi$ then gives rise to a mapping $T_1:\Omega_0\to \Omega_1$.
On the other hand, the ray emitted from $x\in \Omega_0$ when it strikes $S_1$ is also reflected back into a unit direction ${\bf r}(x) = \(r_1(x),r_2(x),r_3(x)\)$, $r_3(x)<0$ according to the generalized law of reflection \eqref{eq:GSLreflection} depending on the phase $\Phi$, and reaching a point $(T_2x,0)$ on $z=0$ with $T_2x\in \Omega_2$;
see Figure \ref{fig:refraction-reflaction}.
The question is then to find a phase $\Phi$ on the surface $z=f(x)$, with $\Phi$ tangential to the surface $S_1$, i.e., $\nabla \Phi\(x,f(x)\)\cdot (-\nabla f(x),1)=0$, such that the following energy conservation conditions are met
\begin{equation}
\int_{T_1^{-1}(E)}\rho_0(x)\,dx=\int_{E}\rho_1(x)\,dx \quad E\subset \Omega_1
\label{eq:conservation of energy multi 1}
\end{equation}
\begin{equation}
\int_{T_2^{-1}(E)}\rho_0(x)\,dx=\int_{E}\rho_2(x)\,dx \quad E\subset \Omega_2.
\label{eq:conservation of energy multi 2}
\end{equation}

 Let us first analyze the trajectory of the ray.
Our light ray starts with unit direction $\textbf{e}(x) = (0, 0, 1)$ from the point $(x,0)$ to $\(x, f(x)\)\in S_1$ and from this point it is refracted to the point $(T_1x, \beta)$ with unit direction ${\bf m}(x)$. The same point $\(x, f(x)\)\in S_1$ is also reflected to the point $(T_2x, 0)$ with unit direction ${\bf r}(x)$.  Then we have that the refracted unit direction is 
\begin{equation}\label{eq:formula for m general field non planar multi}
{\bf m}(x)=\dfrac{\(T_1x-x,\beta-f(x)\)}{\sqrt{|T_1x-x|^2+\(\beta-f(x)\)^2}},
\end{equation}
and the reflected unit direction is 
\begin{equation}\label{eq:formula for r general field non planar multi}
{\bf r}(x)=\dfrac{\(T_2x-x,-f(x)\)}{\sqrt{|T_2x-x|^2+\(-f(x)\)^2}}.
\end{equation}


Applying the GSL \eqref{eq:GSL}, and \eqref{eq:GSLreflection} on the surface $z=f(x)$ yields
\begin{equation*}
    n_1\textbf{e}(x) - n_2\textbf{m}(x) = \lambda\,\nu(x,f(x)) + \nabla\Phi(x,f(x)) 
\end{equation*}
for refraction and
\begin{equation*}
    n_1\textbf{e}(x) - n_1{\bf r}(x) = \lambda'\,\nu(x,f(x)) + \nabla\Phi(x,f(x)) 
\end{equation*}
for reflection, 
where $\textbf{e} = (0,0,1)$, $\textbf{m}=(m_1,m_2,m_3)$ is the refracted unit vector, ${\bf r}=(r_1,r_2,r_3)$ is the reflected unit vector, and $\nu(x,f(x)) = (-\nabla f(x),1)$ is the normal vector to the surface $z=f(x)$ at the point $\(x,f(x)\)$, for some $\lambda,\lambda' \in \R$. Since $\Phi$ is required to be tangential, 
\begin{equation}\label{eq:tangential condition4 multi}
-\Phi_{x_1}(x,f(x))\,f_{x_1}(x)-\Phi_{x_2}(x,f(x))\,f_{x_2}(x)+\Phi_{x_3}(x,f(x))=0.
\end{equation}
We then have the equations
\begin{equation}\label{eq:general e m nonplanar multi refraction}
\begin{cases}
-n_2\,m_1(x)&=-\lambda\,f_{x_1}(x)+\Phi_{x_1}(x,f(x))\\
-n_2\,m_2(x)&=-\lambda\,f_{x_2}(x)+\Phi_{x_2}(x,f(x))\\
n_1-n_2\,m_3(x)&=\lambda+\Phi_{x_3}(x,f(x)),
\end{cases}
\end{equation}
and for reflection
\begin{equation}\label{eq:general e m nonplanar multi reflection}
\begin{cases}
-n_1\,r_1(x)&=-\lambda'\,f_{x_1}(x)+\Phi_{x_1}(x,f(x))\\
-n_1\,r_2(x)&=-\lambda'\,f_{x_2}(x)+\Phi_{x_2}(x,f(x))\\
n_1-n_1\,r_3(x)&=\lambda'+\Phi_{x_3}(x,f(x)).
\end{cases}\end{equation}
Comparing \eqref{eq:formula for m general field non planar multi} and \eqref{eq:general e m nonplanar multi refraction} yields
\begin{align}
\label{eq:formula for Tx-phix}
T_1x-x&=\sqrt{(\beta-f(x))^2+|T_1x-x|^2}\(\dfrac{1}{n_2}\,
\(\lambda\,\nabla f(x)-\(\Phi_{x_1}\(x,f(x)\),\Phi_{x_2}\(x,f(x)\)\)\)\).
\end{align}
Combining the last equation in \eqref{eq:general e m nonplanar multi refraction} with \eqref{eq:formula for m general field non planar multi} and \eqref{eq:tangential condition4 multi} yields 
\begin{align*}
\lambda&=n_1-n_2\,\dfrac{\beta-f(x)}{\sqrt{(\beta-f(x))^2+|T_1x-x|^2}}-\Phi_{x_3}\\
&=n_1-n_2\,\dfrac{\beta-f(x)}{\sqrt{(\beta-f(x))^2+|T_1x-x|^2}}-\Phi_{x_1}\,f_{x_1}-\Phi_{x_2}\,f_{x_2}.
\end{align*}
Hence
\[
\lambda\,f_{x_i}-\Phi_{x_i}=
\(n_1-n_2\,\dfrac{\beta-f(x)}{\sqrt{(\beta-f(x))^2+|T_1x-x|^2}}-\Phi_{x_1}\,f_{x_1}-\Phi_{x_2}\,f_{x_2}\)\,f_{x_i}-\Phi_{x_i},\]
for $i=1,2$, and so 
\begin{align*}
&\lambda\,\nabla f-\(\Phi_{x_1},\Phi_{x_2}\)\\
&=
\(n_1-n_2\,\dfrac{\beta-f(x)}{\sqrt{(\beta-f(x))^2+|T_1x-x|^2}}\)\nabla f
-
\(\nabla f\otimes \nabla f\)\(\Phi_{x_1},\Phi_{x_2}\)-\(\Phi_{x_1},\Phi_{x_2}\)\\
&=
n_1\,\nabla f
-n_2\,\dfrac{\beta-f(x)}{\sqrt{(\beta-f(x))^2+|T_1x-x|^2}}\,\nabla f
-
\(Id+\nabla f\otimes \nabla f\)\(\Phi_{x_1},\Phi_{x_2}\).
\end{align*}
Therefore
\begin{equation}\label{eq:main formula for T1 non flat}
\dfrac{T_1x-x+(\beta-f(x))\nabla f(x)}{\sqrt{(\beta-f(x))^2+|T_1x-x|^2}}
=\dfrac{-1}{n_2}\(Id+\nabla f\otimes \nabla f\)\(\Phi_{x_1},\Phi_{x_2}\)
+\dfrac{n_1}{n_2}\nabla f.
\end{equation}

On the other hand, from $\eqref{eq:formula for r general field non planar multi}$ we have
\[
r_3(x)=\dfrac{-f(x)}{\sqrt{|T_2x-x|^2+\(-f(x)\)^2}},\quad (r_1(x),r_2(x))=\dfrac{T_2x-x}{\sqrt{|T_2x-x|^2+\(-f(x)\)^2}}.\]
From the third equation in \eqref{eq:general e m nonplanar multi reflection} and \eqref{eq:tangential condition4 multi} we get
\[
\lambda'=n_1\(1+\dfrac{f(x)}{\sqrt{|T_2x-x|^2+\(-f(x)\)^2}}\)-\Phi_{x_1}\,f_{x_1}-\Phi_{x_2}\,f_{x_2}\]
and then from the first two equations in \eqref{eq:general e m nonplanar multi reflection}
\begin{align*}
r_i(x)&=\dfrac{1}{n_1}\(\lambda'\,f_{x_i}-\Phi_{x_i}\)\\
&=\dfrac{1}{n_1}\(\(n_1\(1+\dfrac{f(x)}{\sqrt{|T_2x-x|^2+\(-f(x)\)^2}}\)-\Phi_{x_1}\,f_{x_1}-\Phi_{x_2}\,f_{x_2}\)\,f_{x_i}-\Phi_{x_i}\)\\
&=
\(1+\dfrac{f(x)}{\sqrt{|T_2x-x|^2+\(-f(x)\)^2}}\)f_{x_i}-\dfrac{1}{n_1}\Phi_{x_1}\,f_{x_1}f_{x_i}-\dfrac{1}{n_1}\Phi_{x_2}\,f_{x_2}\,f_{x_i}-\dfrac{1}{n_1}\Phi_{x_i},
\end{align*}
for $i=1,2$.
Hence
\[
\(r_1(x),r_2(x)\)
=
\(1+\dfrac{f(x)}{\sqrt{|T_2x-x|^2+\(-f(x)\)^2}}\)\nabla f -\dfrac{1}{n_1}\(Id+\nabla f\otimes \nabla f\)
\(\Phi_{x_1},\Phi_{x_2}\).
\]
Therefore
\[
\dfrac{T_2x-x}{\sqrt{|T_2x-x|^2+\(-f(x)\)^2}}
=
\(1+\dfrac{f(x)}{\sqrt{|T_2x-x|^2+\(-f(x)\)^2}}\)\nabla f -\dfrac{1}{n_1}\(Id+\nabla f\otimes \nabla f\)
\(\Phi_{x_1},\Phi_{x_2}\).
\]
But substituting $\dfrac{1}{n_1}\(Id+\nabla f\otimes \nabla f\)
\(\Phi_{x_1},\Phi_{x_2}\)$ from its value in \eqref{eq:main formula for T1 non flat} yields
\[
\dfrac{T_2x-x}{\sqrt{|T_2x-x|^2+\(-f(x)\)^2}}
=
\dfrac{n_2}{n_1}
\dfrac{T_1x-x+(\beta-f(x))\nabla f(x)}{\sqrt{(\beta-f(x))^2+|T_1x-x|^2}}
+
\(\dfrac{f(x)}{\sqrt{|T_2x-x|^2+\(-f(x)\)^2}}\)\nabla f,\]
which after rearranging the terms, it is 
\begin{equation}\label{eq:relation between T1 and T2 and f}
\dfrac{T_2x-x-f(x)\nabla f}{\sqrt{|T_2x-x|^2+\(-f(x)\)^2}}
=
\dfrac{n_2}{n_1}
\dfrac{T_1x-x+(\beta-f(x))\nabla f(x)}{\sqrt{(\beta-f(x))^2+|T_1x-x|^2}}
.
\end{equation}

We now connect this with multi-marginal optimal transport theory. Let us introduce the cost function 
\begin{equation}\label{multi-marginal cost}
c(x_1,x_2,x_3)=c_1(x_1,x_2)+c_2(x_1,x_3),
\end{equation} where $(x_1,x_2,x_3)\in \Omega_0\times\Omega_1\times\Omega_2$; $c_1(x_1,x_2)=n_2\sqrt{|x_1-x_2|^2+(\beta-f(x_1))^2}$, and $c_2(x_1,x_3)=n_1\sqrt{|x_1-x_3|^2+f(x_1)^2}$.

Note that 
\[
\nabla_{x_1}\(c_1(x_1,x_2)\)= n_2\dfrac{x_1-x_2-(\beta-f(x_1))\nabla f}{\sqrt{|x_1-x_2|^2+(\beta-f(x_1))^2}},\quad
\nabla_{x_1}\(c_2(x_1,x_3)\)=n_1\dfrac{x_1-x_3+f(x_1)\nabla f}{\sqrt{|x_3-x_1|^2+f(x_1)^2}},
\]
and so
\[
\nabla_{x_1} c_1(x_1,T_1x_1)=- n_2\dfrac{T_1x_1-x_1+(\beta-f(x_1))\nabla f(x_1)}{\sqrt{(\beta-f(x_1))^2+|T_1x_1-x_1|^2}},\]
\[
\nabla_{x_1} c_2(x_1,T_2x_1)=- n_1\dfrac{T_2x_1-x_1-f(x_1)\nabla f}{\sqrt{|T_2x_1-x_1|^2+\(-f(x_1)\)^2}}.
\]
Combining these with $\eqref{eq:relation between T1 and T2 and f}$, we get that $\nabla_{x_1}c_1(x_1,T_1x_1) = \nabla_{x_1}c_2(x_1,T_2x_1)$. 
Since $c(x_1,x_2,x_3)=c_1(x_1,x_2)+c_2(x_1,x_3)$, we have 
\begin{align*}
\nabla_{x_1} c(x_1,T_1x_1,T_2x_1) &= \nabla_{x_1}c_1(x_1,T_1x_1) + \nabla_{x_1}c_2(x_1,T_2x_1) \\
&= 2\nabla_{x_1}c_1(x,T_1x)= 2\nabla_{x_1}c_2(x,T_2x).
\end{align*}
Then, from $\eqref{eq:main formula for T1 non flat}$ and $\eqref{eq:relation between T1 and T2 and f}$, 
\begin{equation}\label{eq: equation for Phi multi}
\dfrac{1}{2}\nabla_{x_1}c(x_1,T_1x_1,T_2x_1) = \(Id+\nabla f\otimes \nabla f\)\(\Phi_{x_1},\Phi_{x_2}\) - n_1\nabla f,
\end{equation}
showing that the pair of maps $(T_1, T_2)$ and $\Phi$ are related via the cost $c$. In fact, if the pair $(T_1,T_2)$ is the optimal pair for the cost $c$, then it satisfies the energy conservation conditions \eqref{eq:conservation of energy multi 1} and \eqref{eq:conservation of energy multi 2}, and the phase $\Phi$ is determined by \eqref{eq: equation for Phi multi}.

The results of this section are summarized in the following Theorem.

\begin{theorem}\label{thm:main result representation of the phase}
Suppose the surface $S_1$ is the graph of a function $f$; and let $(T_1,T_2)$ be the optimal pair of maps for the cost \eqref{multi-marginal cost} with densities $\rho_0,\rho_1$ and $\rho_2$ satisfying \eqref{eq:conservation of energy multi} with $\rho_0$ strictly positive. 
If the map $(x_2,x_3)\mapsto \nabla_{x_1}c(x_1,x_2,x_3)$ is injective for all points $x_1\in \Omega_0$, then the desired phase $\Phi$ solving the reflection-refraction problem described in this section satisfies
\[
\(\Phi_{x_1},\Phi_{x_2}\)
=
\(Id - \dfrac{\nabla f\otimes \nabla f}{1 + |\nabla f|^2}\)\(\dfrac{1}{2}Df_1(x) + n_1 \nabla f(x)\),
\] where $f_1$ is the function in Theorem \ref{thm:representation formula for multimarginal}, and $\Phi_{x_i}$, $i=1,2$ is evaluated at $(x,f(x))$.
\end{theorem}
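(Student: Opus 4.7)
The plan is to combine the representation formula from Theorem \ref{thm:representation formula for multimarginal} with the pointwise identity \eqref{eq: equation for Phi multi} derived earlier in Section \ref{subsec:design of reflecting-refracting metasurfaces}, and then invert the rank-one perturbation $Id+\nabla f\otimes \nabla f$ explicitly by the Sherman--Morrison formula.

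First, I would verify that the hypotheses of Theorem \ref{thm:representation formula for multimarginal} apply to the cost \eqref{multi-marginal cost}: the $\Omega_i\subset \R^2$ are compact, \eqref{eq:conservation of energy multi} is the required mass-conservation condition, the assumption $\rho_0>0$ gives $\mu_1(G)>0$ for every open $G\subset \Omega_0$, the function $c=c_1+c_2$ is $C^1$ on $\Omega_0\times \Omega_1\times \Omega_2$ because $f\in C^2$ and the surface $S_1$ lies at positive distance from both planes $z=0$ and $z=\beta$ (so the radicals in $c_1$ and $c_2$ are bounded away from zero), and the injectivity of $(x_2,x_3)\mapsto \nabla_{x_1}c(x_1,x_2,x_3)$ is part of the hypothesis. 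Theorem \ref{thm:representation formula for multimarginal} then yields, for almost every $x\in \Omega_0$,
\[
\nabla f_1(x) = \nabla_{x_1} c\(x,T_1 x, T_2 x\).
\]

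Second, I would substitute this identity into the pointwise relation \eqref{eq: equation for Phi multi}, which reads
\[
\tfrac{1}{2}\,\nabla_{x_1} c\(x,T_1 x, T_2 x\) = \(Id+\nabla f\otimes \nabla f\)\(\Phi_{x_1},\Phi_{x_2}\) - n_1 \nabla f(x),
\]
to obtain the linear system
\[
\(Id+\nabla f\otimes \nabla f\)\(\Phi_{x_1},\Phi_{x_2}\) = \tfrac{1}{2}\,\nabla f_1(x) + n_1 \nabla f(x).
\]
Applying the Sherman--Morrison formula $\(Id+v\otimes v\)^{-1} = Id - (v\otimes v)/(1+|v|^2)$ with $v=\nabla f(x)$ produces the announced expression for $(\Phi_{x_1},\Phi_{x_2})$ evaluated at $(x,f(x))$.

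The main obstacle is not the closing algebra but the verification of the hypotheses: one must check that the cost \eqref{multi-marginal cost} is genuinely $C^1$ (i.e.\ that the two radicals never degenerate), and more subtly that the energy-conservation conditions \eqref{eq:conservation of energy multi 1}--\eqref{eq:conservation of energy multi 2} coincide with the measure-preserving conditions making $(T_1,T_2)$ admissible for the multi-marginal Monge problem. This is why the hypothesis identifies $(T_1,T_2)$ as the optimal pair for the cost \eqref{multi-marginal cost}; once this identification is accepted, the derivation of \eqref{eq: equation for Phi multi} in the preceding subsection plus the representation formula of Theorem \ref{thm:representation formula for multimarginal} reduce the problem to the rank-one inversion above.
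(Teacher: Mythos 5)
Your proposal is correct and follows essentially the same route as the paper: the paper presents this theorem as a summary of Section 5.2, obtained by combining the pointwise relation \eqref{eq: equation for Phi multi} with the representation formula $\nabla_{x_1}f_1(x_1)=\nabla_{x_1}c(x_1,T_1x_1,T_2x_1)$ from Theorem \ref{thm:representation formula for multimarginal} (whose hypotheses are checked in Section \ref{analysisofmultimarginalcost}), and then inverting $Id+\nabla f\otimes\nabla f$ exactly as you do. Your explicit hypothesis-checking (non-degenerate radicals, $\rho_0>0$, mass conservation, injectivity) is a faithful reconstruction of what the paper leaves implicit.
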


\subsection{Analysis of the cost \eqref{multi-marginal cost}}\label{analysisofmultimarginalcost}

In this section, we identify conditions on the function $f$ describing the surface $S_1$ under which the map $(x_2, x_3) \mapsto \nabla_{x_1}c(x_1,x_2,x_3)$ is injective for each $x_1\in \Omega_0$, where $c$ denotes the cost from \eqref{multi-marginal cost}. Under these conditions, Theorem \ref{thm:representation formula for multimarginal} yields the representation formula
\[ 
\nabla_{x_1}f_1(x_1) = \nabla_{x_1}c(x_1,T_1x_1, T_2x_1)
\]
for almost every $x_1 \in \Omega_0$, for some function $f_1$, which, in turn, yields the conclusion of Theorem \ref{thm:main result representation of the phase}.

\begin{proposition}
Let $\Omega_j\subset\R^2$, $0\leq j\leq 2$, be compact domains with $\Omega_1,\Omega_2$ convex, and let $c$ be the cost given by \eqref{multi-marginal cost} with $f\in C^1(\Omega_0)$, $0<f(x)<\beta$ for all $x\in \Omega_0$.
Let $N_0=\max_{z_1\in \Omega_0,z_2\in \Omega_1}|z_1-z_2|$, $N_1=\max_{x_1\in \Omega_0,z\in \Omega_2}|x_1-z|$, $m_0=\min_{x_1\in \Omega_0}\(\beta -f(x_1)\)$, and $m_1=\min_{x_1\in\Omega_0} f(x_1)$.
There exists a positive constant $C_0$, depending on $N_0,N_1,m_0, m_1$ and $\beta$, such that if $\|\nabla f\|_\infty<C_0$, then the map $(x_2, x_3) \mapsto \nabla_{x_1}c(x_1,x_2,x_3)$, from $\Omega_1\times \Omega_2$ to $\R^2$, is injective for each $x_1\in \Omega_0$.
\end{proposition}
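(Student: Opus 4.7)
The plan is to exploit the additive structure $c = c_1 + c_2$ combined with a perturbative analysis around the flat-interface case $\nabla f \equiv 0$. Define $\Psi_1(x_2) := \nabla_{x_1} c_1(x_1,x_2)$ and $\Psi_2(x_3) := \nabla_{x_1} c_2(x_1,x_3)$, so that $\nabla_{x_1} c(x_1,x_2,x_3) = \Psi_1(x_2) + \Psi_2(x_3)$. The injectivity claim then reduces to: whenever $\Psi_1(x_2) + \Psi_2(x_3) = \Psi_1(y_2) + \Psi_2(y_3)$, i.e.\ $\Psi_1(x_2) - \Psi_1(y_2) = \Psi_2(y_3) - \Psi_2(x_3)$, both sides must vanish.

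The first step is to compute the Jacobians $D\Psi_i$ explicitly from the formulas displayed in Section~\ref{analysisofmultimarginalcost}. A direct calculation gives
\[
D_{x_2}\Psi_1(x_2) = -\frac{n_2}{d_1}\left(I - \frac{\bigl((x_1-x_2)-(\beta-f(x_1))\nabla f(x_1)\bigr)(x_1-x_2)^T}{d_1^2}\right),
\]
where $d_1 = \sqrt{|x_1-x_2|^2 + (\beta-f(x_1))^2}$, with an analogous expression for $D_{x_3}\Psi_2$ involving $d_2=\sqrt{|x_1-x_3|^2+f(x_1)^2}$. When $\nabla f \equiv 0$ these matrices are negative-definite, with eigenvalues $-n_2/d_1$ and $-n_2(\beta-f(x_1))^2/d_1^3$, quantitatively controlled below by $N_0, m_0, \beta$; likewise $D\Psi_2$ is controlled by $N_1, m_1$. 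A standard perturbation argument then produces $C_0 = C_0(N_0, N_1, m_0, m_1, \beta) > 0$ such that for $\|\nabla f\|_\infty < C_0$ both Jacobians remain invertible with uniform spectral lower bounds.

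The second step uses the convexity of $\Omega_1, \Omega_2$ to apply the fundamental theorem of calculus along the segments joining $x_2$ to $y_2$ and $x_3$ to $y_3$, giving
\[
\Psi_1(x_2) - \Psi_1(y_2) = M_1(x_2 - y_2), \qquad \Psi_2(y_3) - \Psi_2(x_3) = M_2(y_3 - x_3),
\]
with $M_1, M_2$ averaged Jacobians, invertible by Step~1. The equality thus becomes the single identity $M_1(x_2 - y_2) = M_2(y_3 - x_3)$, and invertibility of each $M_i$ would give $x_2 = y_2$ and $x_3 = y_3$ provided both sides vanish.

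The main obstacle is precisely this last deduction: the identity is a single $\R^2$-valued equation while the unknown differences $(x_2-y_2, y_3-x_3)$ live in $\R^4$, so the linear algebra alone does not force vanishing. Resolving this requires using the explicit geometric content of $\Psi_1$ and $\Psi_2$: each $\Psi_i$ is (up to scaling) the horizontal projection of a unit 3D direction vector emanating from $(x_1, f(x_1))$, one pointing upward into $\Omega_1 \times \{\beta\}$ and the other downward into $\Omega_2 \times \{0\}$. Under the smallness hypothesis $\|\nabla f\|_\infty < C_0$, I expect the matrices $M_1, M_2$ to be close to scalar multiples of the identity with magnitudes controlled by $n_2 m_0^2/N_0^3$ and $n_1 m_1^2/N_1^3$, and the geometric arrangement of $\Psi_1(\Omega_1)$ and $\Psi_2(\Omega_2)$ to be restrictive enough that $M_1(x_2-y_2) = M_2(y_3-x_3)$ can hold only trivially. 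Quantifying this geometric incompatibility — and tracing the dependence on $N_0, N_1, m_0, m_1, \beta$ in the definition of $C_0$ — is the crux of the proof.
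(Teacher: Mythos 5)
Your proposal follows the same route as the paper's proof up to the decisive point: fix $x_1$, use the additive structure $c=c_1(x_1,x_2)+c_2(x_1,x_3)$, integrate the mixed second derivatives along the segments joining $(x_2,x_3)$ to $(y_2,y_3)$ (this is where the convexity of $\Omega_1,\Omega_2$ enters, exactly as in the paper), and show that for $\|\nabla f\|_\infty$ small in terms of $N_0,N_1,m_0,m_1,\beta$ the two $2\times2$ blocks (your averaged Jacobians, the paper's $-M_1(s)$, $-M_2(s)$) are negative/positive definite with uniform bounds; your eigenvalue computation at $\nabla f\equiv0$ plus perturbation is a legitimate substitute for the paper's verification via the $(1,1)$ entry and the Sherman--Morrison determinant formula. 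The genuine gap is the one you yourself flag: from the single $\R^2$-valued identity $M_1(x_2-y_2)=M_2(y_3-x_3)$, invertibility (or definiteness) of $M_1$ and $M_2$ does not force $x_2=y_2$ and $x_3=y_3$, and your appeal to an unquantified ``geometric incompatibility'' is not a proof. As it stands, your argument proves only that each partial map $x_2\mapsto\nabla_{x_1}c_1(x_1,x_2)$ and $x_3\mapsto\nabla_{x_1}c_2(x_1,x_3)$ is injective, not the joint injectivity claimed.

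You should also be aware that the paper's own way past this point does not remove your obstacle. The paper contracts the integral identity with $(X-Y)A$, where $A$ has upper block $-I_2$ and lower block $0$, and in \eqref{eq:formula for x-y M1 M2 x-y^t} the resulting quadratic form is expanded as two terms involving $x_2-y_2$ alone; the correct expansion of $(X-Y)AM(s)(X-Y)^t$ is $(x_2-y_2)(-M_1(s))(x_2-y_2)^t+(x_2-y_2)(-M_2(s))(x_3-y_3)^t$, i.e.\ it contains precisely the sign-indefinite cross term you identified, so positive definiteness of $-M_1(s)$ and $-M_2(s)$ does not by itself yield $x_2=y_2$. In fact the incompatibility you hope to quantify cannot hold in general: for the separable cost \eqref{multi-marginal cost} the map $(x_2,x_3)\mapsto\nabla_{x_1}c_1(x_1,x_2)+\nabla_{x_1}c_2(x_1,x_3)$ sends the four-dimensional set $\Omega_1\times\Omega_2$ into $\R^2$, hence cannot be injective when $\Omega_1\times\Omega_2$ has nonempty interior (invariance of domain; concretely, already for $\nabla f\equiv0$ a small change in $x_2$ can be compensated by a suitable change in $x_3$ because each partial gradient map is an open map on the interior). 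So the missing step cannot be filled as the statement is phrased; a correct completion must settle for a weaker form of injectivity (for instance injectivity in each group of variables separately, or injectivity along the particular pairs $(T_1x_1,T_2x_1)$ produced by the optical problem), which is what your definiteness estimates actually deliver.
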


\begin{proof}
We begin with a general framework. Let $c(x_1,x_2,\ldots,x_N)$ be a $C^1$ cost function with variables $x_j \in \mathbb{R}^n$, and define $X = (x_2, \ldots, x_N)$ so that $c = c(x_1, X)$. We consider the gradient of $c$ with respect to $x_1$:
\[ 
\nabla_{x_1}c(x_1,X) = \left(\frac{\partial c}{\partial x_1^1}(x_1,X),\ldots,\frac{\partial c}{\partial x_1^n}(x_1,X)\right),
\]
where $x_1 = (x_1^1,\ldots,x_1^n)$. Suppose we are given that $\nabla_{x_1}c(x_1,X) = \nabla_{x_1}c(x_1,Y)$. Our goal is to show that $X = Y$ under suitable conditions on $c$.

%
%

Define the path $X(s) = sX + (1-s)Y$, and note that
\begin{align*}
0 &= \frac{\partial c}{\partial x_1^j}(x_1,X) - \frac{\partial c}{\partial x_1^j}(x_1,Y) 
= \int_0^1 \nabla_{x_2,\ldots,x_N} \left( \frac{\partial c}{\partial x_1^j} \right)(x_1,X(s)) \cdot \dot{X}(s)\,ds \\
&= \int_0^1 \nabla_{x_2,\ldots,x_N} \left( \frac{\partial c}{\partial x_1^j} \right)(x_1,X(s)) \cdot (X - Y)\,ds.
\end{align*}

%

Set 
\[
m_j(s)=\nabla_{x_2,\cdots ,x_N}\(\dfrac{\partial c}{\partial x_1^j}\)(x_1,X(s)),
\] 
$1\leq j\leq n$,
that is, $m_j$ is a vector with $n(N-1)$ components.
Let $M(s)$ be the matrix whose rows are the vectors $m_j(s)$; $M(s)$ is then a matrix having $n$ rows and $n(N-1)$ columns. So we can write
\[
0=\nabla_{x_1}c(x_1,X)-\nabla_{x_1}c(x_1,Y)
= 
\int_0^1 M(s) (X-Y)^t\,ds.
\]
To conclude injectivity, let us take any $n(N-1)\times n$ matrix $A$ and since $X-Y$ has $n(N-1)$ components 
it follows that $(X-Y)A$ is a vector with $n$ components. So dotting the last integral identity with $(X-Y)A$
we get 
\[
0=(X-Y)A\cdot \(\nabla_{x_1}c(x_1,X)-\nabla_{x_1}c(x_1,Y)\)
= 
\int_0^1 (X-Y)A M(s) (X-Y)^t\,ds
\]
for any $n(N-1)\times n$ matrix $A$. 
Our strategy is to choose a matrix $A$ such that $AM(s)$ is positive definite for all $s \in [0,1]$, which then implies $X = Y$.


We next apply the framework above with $n=2$ and $N=3$, and proceed to analyze the mixed second-order derivatives of $c$ with respect to $x_1$ and $(x_2,x_3)$. 
The remainder of the proof involves detailed expressions for the gradients, second derivatives, and matrix formulations. 

We have 
\begin{align*}
\nabla_{x_1}c(x_1,x_2,x_3)
&=\nabla_{x_1}c_1(x_1,x_2)+\nabla_{x_1}c_2(x_1,x_3)\\
&=\(\dfrac{\partial c_1}{\partial x_1^1}(x_1,x_2),\dfrac{\partial c_1}{\partial x_1^2}(x_1,x_2) \)
+
\(\dfrac{\partial c_2}{\partial x_1^1}(x_1,x_3),\dfrac{\partial c_2}{\partial x_1^2}(x_1,x_3) \)\\
&=
\(\dfrac{\partial c_1}{\partial x_1^1}(x_1,x_2)+\dfrac{\partial c_2}{\partial x_1^1}(x_1,x_3),
\dfrac{\partial c_1}{\partial x_1^2}(x_1,x_2)+\dfrac{\partial c_2}{\partial x_1^2}(x_1,x_3)\)
\end{align*}
and for $j=1,2$
\begin{align*}
&\nabla_{x_2,x_3}\(\dfrac{\partial c}{\partial x_1^j}\)
=\nabla_{x_2,x_3} \(\dfrac{\partial c_1}{\partial x_1^j}(x_1,x_2)+\dfrac{\partial c_2}{\partial x_1^j}(x_1,x_3)\)\\
&=\resizebox{0.97\textwidth}{!}{$\(\dfrac{\partial}{\partial x_2^1}\(\dfrac{\partial c_1}{\partial x_1^j}(x_1,x_2)+\dfrac{\partial c_2}{\partial x_1^j}(x_1,x_3)\), \dfrac{\partial}{\partial x_2^2}\(\dfrac{\partial c_1}{\partial x_1^j}(x_1,x_2)+\dfrac{\partial c_2}{\partial x_1^j}(x_1,x_3)\),
\dfrac{\partial}{\partial x_3^1}\(\dfrac{\partial c_1}{\partial x_1^j}(x_1,x_2)+\dfrac{\partial c_2}{\partial x_1^j}(x_1,x_3)\),
\dfrac{\partial}{\partial x_3^2}\(\dfrac{\partial c_1}{\partial x_1^j}(x_1,x_2)+\dfrac{\partial c_2}{\partial x_1^j}(x_1,x_3)\)\)$}\\
&=
\(
\dfrac{\partial^2 c_1}{\partial x_2^1\partial x_1^j}(x_1,x_2)
, \dfrac{\partial^2 c_1}{\partial x_2^2 \partial x_1^j}(x_1,x_2),
\dfrac{\partial^2c_2}{\partial x_3^1 \partial x_1^j}(x_1,x_3),
\dfrac{\partial^2 c_2}{\partial x_3^2\partial x_1^j} (x_1,x_3)\).
\end{align*}
Therefore
 \[
\resizebox{1\textwidth}{!}{$m_1(s)=\(
\dfrac{\partial^2 c_1}{\partial x_2^1\partial x_1^1}(x_1,sx_2+(1-s)y_2)
, \dfrac{\partial^2c_1}{\partial x_2^2 \partial x_1^1}(x_1,sx_2+(1-s)y_2),
\dfrac{\partial^2c_2}{\partial x_3^1 \partial x_1^1}(x_1,sx_3+(1-s)y_3),
\dfrac{\partial^2c_2}{\partial x_3^2\partial x_1^1} (x_1,sx_3+(1-s)y_3)\)$}
\]

 \[
\resizebox{1\textwidth}{!}{$m_2(s)=\(
\dfrac{\partial^2 c_1}{\partial x_2^1\partial x_1^2}(x_1,sx_2+(1-s)y_2)
, \dfrac{\partial^2c_1}{\partial x_2^2 \partial x_1^2}(x_1,sx_2+(1-s)y_2),
\dfrac{\partial^2c_2}{\partial x_3^1 \partial x_1^2}(x_1,sx_3+(1-s)y_3),
\dfrac{\partial^2c_2}{\partial x_3^2\partial x_1^2} (x_1,sx_3+(1-s)y_3)\)$}.
\]
%
Therefore setting
\[
M_1(s)
=
\begin{bmatrix}
\dfrac{\partial^2 c_1}{\partial x_2^1\partial x_1^1}(x_1,sx_2+(1-s)y_2)
& \dfrac{\partial^2c_1}{\partial x_2^2 \partial x_1^1}(x_1,sx_2+(1-s)y_2)
\\
\dfrac{\partial^2 c_1}{\partial x_2^1\partial x_1^2}(x_1,sx_2+(1-s)y_2)
& \dfrac{\partial^2c_1}{\partial x_2^2 \partial x_1^2}(x_1,sx_2+(1-s)y_2)
\end{bmatrix}
\]
and
\[
M_2(s)
=
\begin{bmatrix}
\dfrac{\partial^2c_2}{\partial x_3^1 \partial x_1^1}(x_1,sx_3+(1-s)y_3)
&\dfrac{\partial^2c_2}{\partial x_3^2\partial x_1^1} (x_1,sx_3+(1-s)y_3)\\
\dfrac{\partial^2c_2}{\partial x_3^1 \partial x_1^2}(x_1,sx_3+(1-s)y_3)
& \dfrac{\partial^2c_2}{\partial x_3^2\partial x_1^2} (x_1,sx_3+(1-s)y_3)
\end{bmatrix}\]
we can write
\[
M(s)
=
\begin{bmatrix}
M_1(s) & M_2(s)
\end{bmatrix}.
\]

We have from \cite[Formula (4.12)]{2025-altinergutierrez:metasurfaces}
\begin{align}\label{eq:hessian of c beta}
\dfrac{\partial^2 c}{\partial x\partial y}
&=
\dfrac{-1}{c(x,y)}
\(\(\dfrac{x-y}{c(x,y)} \)\otimes \(\dfrac{(\beta-f(x))\nabla f+y-x}{c(x,y)} \)+Id \)\notag\\
&= -\dfrac{1}{c(x,y)^3} \(\(x-y\)\otimes \((\beta-f(x))\nabla f+y-x \)+c(x,y)^2\,Id \),
\end{align}
for $c$ of the form $c(x,y)=\sqrt{(f(x)-\beta)^2+|x-y|^2}$.

From \eqref{eq:hessian of c beta} with $g(y)=\beta$ the matrix $M_1$ reads
\begin{align*}
M_1(s)
&=
\dfrac{-1}{c_1(x_1,sx_2+(1-s)y_2)^3}\\
&\qquad \(\(x_1-y_2 -s(x_2-y_2) \)\otimes \((\beta-f(x_1))\nabla f(x_1)+s(x_2-y_2)+y_2-x_1 \)+c_1(x_1,sx_2+(1-s)y_2)^2 Id \)
\end{align*}
and \eqref{eq:hessian of c beta} with $\beta=0$ yields
\begin{align*} M_2(s)
&=
\dfrac{-1}{c_2(x_1,sx_3+(1-s)y_3)^3}\\
&\qquad 
\(\(x_1-y_3-s(x_3-y_3) \)
\otimes \(-f(x_1)\nabla f(x_1)+s(x_3-y_3)+y_3-x_1 \)+c_2(x_1,sx_3+(1-s)y_3)^2 Id \).
\end{align*}
If 
$
A=
\begin{bmatrix}
A_1 \\
A_2
\end{bmatrix}
$
with $A_1,A_2$ $2\times 2$ matrices, we have
\[
A M(s)
=
\begin{bmatrix}
A_1\\ 
A_2
\end{bmatrix}
\begin{bmatrix}
M_1(s) & M_2(s)
\end{bmatrix}
=
\begin{bmatrix}
A_1 M_1(s) & A_1 M_2(s)\\
A_2 M_1(s) & A_2 M_2(s)
\end{bmatrix}.
\]
Let $A$ be the $4\times 2$ matrix of the form
\[
\begin{bmatrix}
-I_2\\
0
\end{bmatrix}
\]
with $I_2$ the identity $2\times 2$ matrix, so
\[
A M(s)
=
\begin{bmatrix}
-I_2\\ 
0
\end{bmatrix}
\begin{bmatrix}
M_1(s) & M_2(s)
\end{bmatrix}
=
\begin{bmatrix}
-M_1(s) & -M_2(s)\\
0 & 0
\end{bmatrix}.
\]
Hence
\begin{align}\label{eq:integral representation with arbitrary matrix A}
0&=(X-Y)A\cdot \(\nabla_{x_1}c(x_1,X)-\nabla_{x_1}c(x_1,Y)\)
= 
\int_0^1 (X-Y)A M(s) (X-Y)^t\,ds\notag\\
&=
\int_0^1 (X-Y)\begin{bmatrix}
-M_1(s) & -M_2(s)\\
0 & 0
\end{bmatrix} (X-Y)^t\,ds.
\end{align}
Notice that if $X=(x_2^1,x_2^2,x_3^1,x_3^2)$, $Y=(y_2^1,y_2^2,y_3^1,y_3^2)$, then
\begin{align}\label{eq:formula for x-y M1 M2 x-y^t}
&(X-Y)\begin{bmatrix}
-M_1(s) & -M_2(s)\\
0 & 0
\end{bmatrix} (X-Y)^t\notag\\
&=
[x_2^1-y_2^1,x_2^2-y_2^2,x_3^1-y_3^1,x_3^2-y_3^2]
\begin{bmatrix}
-M_1(s) & -M_2(s)\\
0 & 0
\end{bmatrix}
\begin{bmatrix}
x_2^1-y_2^1\\
x_2^2-y_2^2\\
x_3^1-y_3^1\\
x_3^2-y_3^2
\end{bmatrix}\notag\\
&=
[x_2^1-y_2^1,x_2^2-y_2^2]
\begin{bmatrix}
-M_1(s)
\end{bmatrix}\begin{bmatrix}
x_2^1-y_2^1\\
x_2^2-y_2^2
\end{bmatrix}
+
[x_2^1-y_2^1,x_2^2-y_2^2]
\begin{bmatrix}
-M_2(s)
\end{bmatrix}\begin{bmatrix}
x_2^1-y_2^1\\
x_2^2-y_2^2
\end{bmatrix}.
\end{align}
Assuming conditions on $f$, we shall prove that the matrices $-M_1(s)$ and $-M_2(s)$ are both positive definite.

Let us prove first that $-M_1(s)$ is positive definite.
To do this, we show that the entry (1,1) of the matrix $-M_1(s)$ and $\det -M_1(s)>0$ are both positive.
This entry is equal to
\begin{align*}
&m_{11}\\
&=\dfrac{1}{c_1(x_1,sx_2+(1-s)y_2)^3}\\
&\(c_1(x_1,sx_2+(1-s)y_2)^2-\(x_1^1-y_2^1 -s(x_2^1-y_2^1)\)^2
+\(x_1^1-y_2^1 -s(x_2^1-y_2^1) \) \((\beta-f(x_1))\partial_{x_1^1} f(x_1)\)\)\\
&=\dfrac{1}{c_1(x_1,sx_2+(1-s)y_2)^3}\\
& \(n_2^2\,\((\beta -f(x_1))^2+|x_1-(sx_2+(1-s)y_2)|^2\)
-
\(x_1^1-y_2^1 -s(x_2^1-y_2^1)\)^2\right. \\
&\qquad \qquad \qquad \qquad \qquad \left. +\(x_1^1-y_2^1 -s(x_2^1-y_2^1) \) \((\beta-f(x_1))\partial_{x_1^1} f(x_1)\) \)
.
\end{align*} 
So $m_{11}$ is positive if we show a positive lower bound for 
\begin{align*}
L&:=\,n_2^2\,\((\beta -f(x_1))^2+|x_1-(sx_2+(1-s)y_2)|^2\)\\
&\qquad -\(x_1^1-y_2^1 -s(x_2^1-y_2^1)\)^2 +\(x_1^1-y_2^1 -s(x_2^1-y_2^1) \) \((\beta-f(x_1))\partial_{x_1^1} f(x_1)\) \\
&\geq 
n_2^2\,\((\beta -f(x_1))^2+|x_1-(sx_2+(1-s)y_2)|^2\)\\
&\qquad -|x_1-(sx_2+(1-s)y_2)|^2 +\(x_1^1-y_2^1 -s(x_2^1-y_2^1) \) \((\beta-f(x_1))\partial_{x_1^1} f(x_1)\).
\end{align*}
Since $\Omega_1$ is a convex domain, $sx_2+(1-s)y_2\in \Omega_1$ for $0\leq s\leq 1$ and $x_2,y_2\in \Omega_1$, and for each $x_1\in \Omega_0$
\begin{equation}\label{eq:definition of N0}
|x_1-y_2- s(x_2-y_2)|\leq \max_{z_1\in \Omega_0,z_2\in \Omega_1}|z_1-z_2|:=N_0.
\end{equation}
Hence
\begin{align*}
&\left| \(x_1^1-y_2^1 -s(x_2^1-y_2^1) \) \((\beta-f(x_1))\partial_{x_1^1} f(x_1)\)\right|\\
&\qquad \leq |x_1-(sx_2+(1-s)y_2)| \,\left|(\beta-f(x_1))\nabla_{x_1} f(x_1)\right|\leq N_0 \,\left|(\beta-f(x_1))\nabla_{x_1} f(x_1)\right|\end{align*}
and so 
\begin{align*}
&\(x_1^1-y_2^1 -s(x_2^1-y_2^1) \) \((\beta-f(x_1))\partial_{x_1^1} f(x_1)\)
\geq -N_0 \,\left|(\beta-f(x_1))\nabla_{x_1} f(x_1)\right|.
\end{align*}
Hence
\begin{align*}
L&\geq n_2^2\,(\beta -f(x_1))^2+\(n_2^2-1\)|x_1-(sx_2+(1-s)y_2)|^2
-N_0\,\left|(\beta-f(x_1))\nabla_{x_1} f(x_1)\right|.
\end{align*}
Since refractive indices are greater than or equal to one, we can drop the term containing $|x_1-(sx_2+(1-s)y_2)|^2$ to get
\begin{align*}
L&\geq n_2^2\,(\beta -f(x_1))^2
-N_0\,\left|(\beta-f(x_1))\nabla_{x_1} f(x_1)\right| \geq 
n_2^2\,m_0^2
-N_0\,\beta\,\|\nabla f\|_\infty,
\end{align*}
where 
\begin{equation*}\label{eq:definition of m0}
\min_{x_1\in \Omega_0}\(\beta -f(x_1)\)=m_0>0.
\end{equation*} 
If 
\begin{equation}\label{eq:condition for m11 positive when n2bigger than 1}
\|\nabla f\|_\infty< \dfrac{n_2^2 \,m_0^2}{\beta\,N_0},
\end{equation}
then we obtain that $m_{11}\geq C_0>0$ for all $0\leq s\leq 1$.



We next show that $\det -M_1(s)>0$. 
In fact,
from the Sherman-Morrison determinant formula (see \cite[Formula after (4.12)]{2025-altinergutierrez:metasurfaces})
\begin{align}\label{eq:determinant formula for general c}
\det \dfrac{\partial^2 c}{\partial x\partial y}
&=\(\dfrac{-1}{c(x,y)}\)^n\,\(1+\dfrac{x-y}{c(x,y)}\cdot \dfrac{(\beta-f(x))\nabla f+y-x}{c(x,y)}\)\notag\\
&=\(\dfrac{-1}{c(x,y)}\)^n\,\dfrac{1}{c(x,y)^2}\(c(x,y)^2+
(\beta-f(x))\,(x-y)\cdot \nabla f(x)-|x-y|^2\)\notag\\
&=
\(\dfrac{-1}{c(x,y)}\)^n\,\dfrac{1}{c(x,y)^2}
\((\beta-f(x))^2+(\beta-f(x))\,(x-y)\cdot \nabla f(x)\)\notag\\
&=
\(\dfrac{-1}{c(x,y)}\)^n\,\dfrac{\beta-f(x)}{c(x,y)^2}
\(\beta-f(x)+(x-y)\cdot \nabla f(x)\).
\end{align}
Hence 
{\small \begin{align*}
&\det -M_1(s)=\det M_1(s)\\
&=
\(\dfrac{-1}{c_1(x_1,sx_2+(1-s)y_2)}\)^2\, \dfrac{(\beta-f(x_1))}{c_1(x_1,sx_2+(1-s)y_2)^2}
\(\beta-f(x_1)+(x_1-sx_2-(1-s)y_2))\cdot \nabla f(x_1)\).
\end{align*}
}
For this determinant to be positive we need
\[
\beta-f(x_1)+(x_1-sx_2-(1-s)y_2))\cdot \nabla f(x_1)>0
\]
for all $x_1\in \Omega_0$ and all $x_2,y_2\in \Omega_1$.
As before, for $z_2\in \Omega_1$, we have $|(x_1-z_2)\cdot \nabla f(x_1)|\leq N_0 \|\nabla f\|_\infty $, and hence  
\[
\beta-f(x_1)+(x_1-sx_2-(1-s)y_2))\cdot \nabla f(x_1)
\geq 
m_0-N_0 \|\nabla f\|_\infty.
\]
Therefore, if 
\begin{equation}\label{eq:condition on the gradient of f}
\|\nabla f\|_\infty<\dfrac{m_0}{N_0}, 
\end{equation}
then the determinant is positive.


Similarly, we prove under similar assumptions on the gradient of $f$ that the matrix $-M_2(s)$ for the cost $c_2$ is also positive definite. So, from \eqref{eq:integral representation with arbitrary matrix A} and \eqref{eq:formula for x-y M1 M2 x-y^t} we get that $x_2=y_2$. 

To prove that $-M_2(s)$ is positive definite, we notice that the cost $c_2$ has a similar form of the cost $c_1$ where the function $\beta-f(x_1)$ is replaced by $f(x_1)$ and the graph of $f(x_1)$ is at a positive distance from the graph of $z=0$. 
From the form of $-M_2(s)$, its entry $1,1$ equals to
\begin{align*}
&\dfrac{1}{c_2(x_1,sx_3+(1-s)y_3)^3}\\
&\(c_2(x_1,sx_3+(1-s)y_3)^2-\(x_1^1-y_3^1 -s(x_3^1-y_3^1)\)^2
+\(x_1^1-y_3^1 -s(x_3^1-y_3^1) \) \((-f(x_1))\partial_{x_1^1} f(x_1)\)\)\\
&=\dfrac{1}{c_2(x_1,sx_3+(1-s)y_3)^3}\\
& \(n_1^2\,\(f(x_1)^2+|x_1-(sx_3+(1-s)y_3)|^2\)
-
\(x_1^1-y_3^1 -s(x_3^1-y_3^1)\)^2\right. \\
&\qquad \qquad \qquad \qquad \qquad \left. -\(x_1^1-y_3^1 -s(x_3^1-y_3^1) \) \(f(x_1)\,\partial_{x_1^1} f(x_1)\) \)
:=\dfrac{1}{c_2(x_1,sx_3+(1-s)y_3)^3}\,\tilde L
.
\end{align*}  
If we let $N_1=\max_{x_1\in \Omega_0,z\in \Omega_2}|x_1-z|$, $m_1=\min_{x_1\in\Omega_0} f(x_1)>0$, since $\Omega_2$ is convex, a calculation similar to the one done before yields
\[
\tilde L\geq n_1^2 \,m_1^2 +(n_1^2-1)\,|x_1-(y_3+s(x_3-y_3))|^2 -N_1\,\beta \,\|\nabla f\|_\infty.
\] 

Since $n_1\geq 1$, dropping the middle term we then get that the entry $1,1$ of the matrix $-M_2(s)$ is strictly positive if 
\begin{equation}\label{eq:condition for m11 positive when n1bigger than 1}
\|\nabla f\|_\infty< \dfrac{n_1^2 \,m_1^2}{\beta\,N_1}.
\end{equation}
Next we show that $\det -M_2(s)$ is positive.
Applying formula \eqref{eq:determinant formula for general c} for the cost $c_2$ we have that $\beta=0$ and so
{\small \begin{align*}
&\det -M_2(s)=\det M_2(s)\\
&=
\(\dfrac{-1}{c_2(x_1,sx_3+(1-s)y_3)}\)^2\, \dfrac{(-f(x_1))}{c_2(x_1,sx_3+(1-s)y_3)^2}
\(-f(x_1)+(x_1-sx_3-(1-s)y_3))\cdot \nabla f(x_1)\)\\
&=
\(\dfrac{1}{c_2(x_1,sx_3+(1-s)y_3)}\)^4\, 
\(f(x_1)^2- f(x_1)\,(x_1-sx_3-(1-s)y_3))\cdot \nabla f(x_1)\).
\end{align*}
}
For this determinant to be positive we need
\[
f(x_1)^2- f(x_1)\,(x_1-sx_3-(1-s)y_3))\cdot \nabla f(x_1)>0
\]
for all $x_1\in \Omega_0$ and all $x_3,y_3\in \Omega_2$.
As before, for $x_1\in \Omega_0$ and $z_2\in \Omega_2$, we have $|(x_1-z_2)\cdot \nabla f(x_1)|\leq N_1 \|\nabla f\|_\infty $, and hence  
\[
f(x_1)-(x_1-sx_3-(1-s)y_3))\cdot \nabla f(x_1)
\geq 
m_1-N_1\, \|\nabla f\|_\infty.
\]

Therefore, if 
\begin{equation}\label{eq:condition on the gradient of f for M2}
\|\nabla f\|_\infty<\dfrac{m_1}{N_1}, 
\end{equation}
then the determinant is positive.

Finally, it remains to prove that $x_3=y_3$. We proceed as before but taking $A=\begin{bmatrix}
0\\
-I_2
\end{bmatrix}$ 
to get that
\[
A M(s)
=
\begin{bmatrix}
0\\
-I_2
\end{bmatrix}
\begin{bmatrix}
M_1(s) & M_2(s)
\end{bmatrix}
=
\begin{bmatrix}
0 & 0\\
-M_1(s) & -M_2(s)
\end{bmatrix},
\]
and similarly to \eqref{eq:integral representation with arbitrary matrix A} we have
\[
0=
\int_0^1 (X-Y)\begin{bmatrix}
0 & 0\\
-M_1(s) & -M_2(s)
\end{bmatrix} (X-Y)^t\,ds
\]
which from the positivity of $-M_i(s)$ yields $x_3=y_3$.

Consequently, assuming that $C_0$ is the minimum of the constants in \eqref{eq:condition for m11 positive when n2bigger than 1}, \eqref{eq:condition on the gradient of f}, \eqref{eq:condition for m11 positive when n1bigger than 1}, and \eqref{eq:condition on the gradient of f for M2}, we derive the injectivity of the map $(x_2,x_3)\mapsto \nabla_{x_1}c(x_1,x_2,x_3)$ for all points $x_1\in \Omega_0$.
\end{proof}


%
%
%
%
\bibliography{monamp_june19-IREM}
\bibliographystyle{amsalpha}

\end{document}